\newtheorem{theorem}{\sc Theorem}[section]
\newtheorem{lemma}[theorem]{\sc Lemma}
\newtheorem{corollary}[theorem]{\sc Corollary}
\newcommand{\zp}{\mathbb{Z}_p}
\newcommand{\hz}{\widehat{\mathbb{Z}}}
\begin{document}

\author{Jo\~ao Azevedo}
\address{Department of Mathematics, University of Brasilia\\
Brasilia-DF \\ 70910-900 Brazil}
\email{J.P.P.Azevedo@mat.unb.br}

\author{Pavel Shumyatsky}
\address{Department of Mathematics, University of Brasilia\\
Brasilia-DF \\ 70910-900 Brazil}
\email{pavel@unb.br}
\thanks{Supported by CNPq and FAPDF}
\keywords{Profinite groups, Words, Conciseness}
\subjclass[2010]{Primary 20E26, 20F45, 20F10} 

\title[Conciseness of some words]{On finiteness of some verbal subgroups in profinite groups}
 \begin{abstract} Given a group word $w$ and a group $G$, the set of $w$-values in $G$ is denoted by $G_w$ and the verbal subgroup $w(G)$ is the one generated by $G_w$. In the present paper we consider profinite groups admitting a word $w$ such that the cardinality of $G_w$ is less than $2^{\aleph_0}$ and $w(G)$ is generated by finitely many $w$-values. For several families of words $w$ we show that under these assumptions $w(G)$ must be finite. Our results are related to the concept of conciseness of group words.
\end{abstract}
\maketitle

\section{Introduction}

Let $w = w(x_1,\ldots,x_{k})$ be a group word, i.e., a nontrivial element of the free group on $x_1, \ldots, x_{k}$.  This can be viewed as a function of $k$ variables defined on any group $G$. We denote by $G_w$ the set of all $w$-values in $G$ and  by $w(G)$ the verbal subgroup generated by $G_w$. In the context of topological groups $G$, we write $w(G)$ to denote
the closed subgroup generated by all $w$-values in $G$.

The word $w$ is called concise in a class $\mathcal{C}$ of groups if, for each $G$ in $\mathcal{C}$ such that $G_w$ is finite, also $w(G)$ is finite.  In the sixties Hall raised the problem whether all  words are concise. In 1989 S. Ivanov \cite{ivanov} (see also \cite[p.\ 439]{ols}) solved the problem in the negative. On the other hand, the problem for residually finite and profinite groups remains open (cf. Segal \cite[p.\ 15]{Segal} or Jaikin-Zapirain \cite{jaikin}). In recent years some limited progress with respect to this problem was made. In particular, it was shown in \cite{as} that if $w$ is a multilinear commutator word and $n$ is a prime-power, then the word $w^n$ is concise in the class of residually finite groups. Further examples of words that are concise in residually finite groups were discovered in \cite{gushu}. The works \cite{dms1,dms2,dms3} deal with conciseness of words of Engel type.

For profinite groups a variation of the classical notion of conciseness arises quite naturally: following \cite{dks} we say that $w$ is strongly concise in a class $\mathcal{C}$ of profinite groups if, for each $G$ in $\mathcal{C}$, already the bound $\lvert G_w \rvert < 2^{\aleph_0}$ implies that $w(G)$ is finite (a somewhat weaker notion of conciseness for profinite groups was considered in \cite{dms0}). It was conjectured in \cite{dks} that every group word is strongly concise, and the conjecture was confirmed for several families of group words. In particular, in \cite{dks} the conjecture was confirmed for multilinear commutator words (see the next section for the relevant definition). Moreover, it was shown that in the class of nilpotent profinite groups every group word is strongly concise.

In \cite{dks}, special attention was given to profinite groups admitting a word $w$ such that $\lvert G_w \rvert < 2^{\aleph_0}$ and $w(G)$ is generated by finitely many $w$-values. The finiteness of $w(G)$ under these assumptions was established for several types of words. In particular, $w(G)$ was shown to be finite whenever $w$ implies virtual nilpotency or is weakly
rational.

Recall that a group word $w$ is a law in a group $G$ if
$w(G)=1$.  We say that $w$ implies virtual nilpotency if every
finitely generated metabelian group for which $w$ is a law has a
nilpotent subgroup of finite index.  Burns and Medvedev~\cite{BuMe03}
showed that if $w$ implies virtual nilpotency, then for a much larger
class of groups~$G$, including all finitely generated residually
finite groups, $w(G) = 1$ implies that $G$ is nilpotent-by-finite. By a result of Gruenberg~\cite{Gr53}, all Engel words $[y,{}_n x]=[y,\underbrace{x,\dots,x}_n]$ imply virtual nilpotency. Other examples of words implying virtual nilpotency include generalisations of Engel words, such as words of the form $w = w(x,y) = [y^{e_1},x^{e_2},\ldots,x^{e_{k}}]$, where   $k\geq1$ and $e_1,\ldots,e_{k}$ are non-zero integers (see \cite[Section 4]{dks}). Henceforth, we use the left-normed simple commutator notation
$[x_1,x_2,x_3,\dots ,x_k]:=[...[[x_1,x_2],x_3],\dots ,x_k]$ and the abbreviation $[y,\,{}_nx]:=[y,x,\dots, x]$ where $x$ is repeated $n$ times. The word $$\gamma_k(x_1,x_2,\dots ,x_k)=[x_1,x_2,\dots ,x_k]$$ is called the $k$th lower central word and $\gamma_k(G)$ is of course the $k$th term of the lower central series of a group $G$. 

Following~\cite{gushu} we say that a group word $w$ is `weakly rational' if, for every finite group $G$ and for every positive integer $e$ with $\gcd(e,\lvert G \rvert) = 1$, the set $G_w$ is closed under taking $e$th powers of its elements. According to \cite[Theorem~3]{gushu}, the word $w={\gamma_k}^q$ is weakly rational for all positive integers $k,q$.

Our first result in the present paper is as follows.

\begin{theorem}\label{theorem 1}
Let $k,n$ and $q$ be positive integers, and let $v=\gamma_k(x_1,x_2,\dots ,x_k)$. Suppose that $w$ is one of the words $[y, {}_n v^q]$ or $[v^q, {}_n y]$. If $G$ is a profinite group such that $|G_w| < 2^{\aleph_0}$ and $w(G)$ is generated by finitely many $w$-values, then $w(G)$ is finite.
\end{theorem}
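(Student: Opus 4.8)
The plan is to reduce the statement to the two facts already available in the literature: first, that the "inner" word $v^q = \gamma_k^q$ is weakly rational (by \cite[Theorem~3]{gushu}), and second, that the "outer" Engel-type structure $[y,{}_n u]$ or $[u,{}_n y]$ (with $u$ playing the role of $v^q$) produces a word that implies virtual nilpotency. Indeed, the words $[y,{}_n x]$ are Engel words, and by Gruenberg's theorem they imply virtual nilpotency; more generally the generalised Engel words $[y^{e_1},x^{e_2},\dots]$ discussed in the excerpt (following \cite[Section 4]{dks}) cover exactly the shape we need once we treat $v^q$ as a single variable. So the two-layered word $w$ is built from an inner weakly rational word and an outer virtually-nilpotent-implying word, and the strategy is to exploit both properties simultaneously.

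\smallskip

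First I would recall the structural dichotomy used for strong conciseness: if $G$ is a profinite group with $\lvert G_w\rvert<2^{\aleph_0}$, then $G_w$ is actually countable, and a Baire-category / compactness argument (as in \cite{dks}) shows that $G$ has an open subgroup in which the relevant $w$-values are suitably controlled; in particular one reduces to the case where $G$ is (topologically) finitely generated. The key reduction is to pass to the hypothesis that $w(G)$ is finitely generated as a verbal subgroup, so that $w(G)=\langle g_1,\dots,g_m\rangle$ with each $g_i\in G_w$, and to show that the closure of this subgroup is finite. I would then argue that it suffices to bound the orders of the finite continuous quotients of $w(G)$ uniformly, since a finitely generated profinite group with uniformly bounded finite quotients is finite.

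\smallskip

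Next, in each finite continuous quotient $\bar G$ of $G$, I would analyse the subgroup $\overline{w(G)}$ using the two properties above. The outer Engel-type layer, via Gruenberg/Burns–Medvedev (\cite{Gr53,BuMe03}), forces the subgroup generated by the inner values $u=v^q$ to act on the relevant sections in a way that is nilpotent-by-finite; concretely, one uses that the word $w$ implies virtual nilpotency to deduce that the action of the $u$-values on $w(G)$ is nilpotent modulo a bounded-index piece. Simultaneously, weak rationality of $v^q$ lets me close $G_{v^q}$ under coprime power maps, which is exactly what is needed to invoke the argument of \cite{dks} showing that finitely many $w$-values generate a finite verbal subgroup: the coprime-closure property rules out the infinite-order obstruction, while the virtual nilpotency controls the derived structure. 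Combining these, I expect to conclude that $\overline{w(G)}$ has bounded order independent of the quotient, whence $w(G)$ is finite.

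\smallskip

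I expect the main obstacle to be the interaction between the two layers: the inner word $v^q$ is a power of a multilinear commutator and carries the weak-rationality data, while the outer word wraps it in an Engel bracket, and neither property alone is used "off the shelf." The delicate point is verifying that the composite word $[y,{}_n v^q]$ (respectively $[v^q,{}_n y]$) genuinely implies virtual nilpotency — one must check that substituting the commutator $v^q$ for a single variable in a generalised Engel word still lands in the class covered by \cite[Section 4]{dks}, and that the weak rationality of $v^q$ transfers to a usable closure property for $G_w$ rather than merely for $G_{v^q}$. Managing this transfer, and ensuring the bounds obtained in the finite quotients are uniform, is where the real work will lie; the two displayed forms $[y,{}_n v^q]$ and $[v^q,{}_n y]$ will likely require symmetric but separate treatment of which variable the coprime-power closure is applied to.
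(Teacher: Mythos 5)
Your proposal has genuine gaps at its two load-bearing points. First, the central plank --- that the composite word $[y,{}_n v^q]$ (resp.\ $[v^q,{}_n y]$) implies virtual nilpotency --- is exactly what you cannot take off the shelf. The generalised Engel words covered by \cite[Section 4]{dks} have the form $[y^{e_1},x^{e_2},\dots,x^{e_k}]$ in \emph{independent} variables; substituting the commutator word $\gamma_k^q$ for a variable leaves that class, and no argument is offered that the resulting law forces finitely generated metabelian groups to be virtually nilpotent. If this implication did hold, the theorem would already be a special case of the results in \cite{dks} for words implying virtual nilpotency, and there would be nothing to prove; the whole point of the theorem is that these words fall outside that class. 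You flag this yourself as ``the delicate point,'' but flagging it does not close it. Second, the proposed endgame --- uniformly bounding the orders of $w(G)$ in all finite continuous quotients --- has no mechanism behind it; nothing in the weak rationality of $v^q$ or in the Engel shape of $w$ produces such uniform bounds. (Also, the assertion that $\lvert G_w\rvert<2^{\aleph_0}$ makes $G_w$ countable is false without the continuum hypothesis and is never used or needed.)

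What is actually required, and what the paper does, is different in kind. One first reduces to $w(G)$ abelian via \cite[Lemma 4.3]{dks}, then uses a Baire-category argument on the closed sets $W_j=\{g\in w(G):[g,{}_n a]^j=1\}$ to show that each $v^q$-value $a$ becomes an Engel element modulo a \emph{finite} normal subgroup; combined with Heineken's lemma and a Gruenberg-type result this yields that any subgroup generated by finitely many $v^q$-values is nilpotent modulo a finite normal subgroup. The decisive new ingredient is then a lemma about nilpotent profinite groups: if $G$ is nilpotent, generated by $a_1,\dots,a_k$, and the set $X=\{w(a_1^i,\dots,a_k^i):i\in\mathbb{U}\}$ (powers ranging over generators $i$ of $\widehat{\mathbb{Z}}$) has cardinality less than $2^{\aleph_0}$, then every element of $X$ has finite order. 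Weak rationality of $\gamma_k^q$ enters only at this point, to guarantee that $a^i$ is again a $v^q$-value and hence that $X\subseteq G_w$ is small. Applying this to the nilpotent subgroup generated by the relevant conjugates of $a$ shows each $w$-value has finite order, so $w(G)$ is a finitely generated abelian group of torsion elements, hence finite. Your sketch contains neither the Baire-category step nor any substitute for this torsion lemma, so as it stands it does not constitute a proof.
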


Note that \cite[Theorem 3]{dms1} implies that the word $w$ in Theorem \ref{theorem 1} is concise (in the usual sense) in the class of profinite groups. Our next result is related to \cite[Theorem 1.2]{dms3}.  Let us say that a word $w$ is commutator-closed if the set of $w$-values in any group is closed under taking commutators of its elements. It is easy to see that in particular the lower central words $\gamma_k$ are commutator-closed.

\begin{theorem}\label{theorem 2}
Let $n$ and $q$ be non-negative integers. Let $u$ be one of the words $y^q$, $[y_1, y_2]^q$ or any commutator-closed word. Assume that $v$ is a weakly rational word such that all $v$-values are also $u^{-1}$-values in any group and consider $w = [v, {}_n u]$. If $G$ is a profinite group such that $|G_w| < 2^{\aleph_0}$ and $w(G)$ is generated by finitely many $w$-values, then $w(G)$ is finite.
\end{theorem}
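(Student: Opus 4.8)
The plan is to show that the topologically finitely generated profinite group $H = w(G)$ is finite by proving that it is torsion and then invoking Zelmanov's theorem that a periodic profinite group is locally finite. First I would carry out the standard reduction to the finitely generated case: writing the finitely many generating $w$-values as $g_i = w(\bar a_i)$ and letting $G_0$ be the closed subgroup generated by the entries of the tuples $\bar a_i$, one has $w(G_0) = w(G)$ and $|G_{0,w}| \le |G_w| < 2^{\aleph_0}$, so we may assume $G$ itself is finitely generated. Then $G/w(G)$ is a finitely generated profinite group satisfying the law $w \equiv 1$.

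Next I would establish that $w = [v,{}_nu]$ implies virtual nilpotency. Since every $v$-value is a $u^{-1}$-value, the law $w\equiv 1$ specialises to the Engel identity $[c^{-1},{}_n b]=1$ for $u$-values $b,c$; in each of the three cases for $u$ (using that a commutator-closed set of $u$-values is stable under the iterated commutation occurring in $[v,{}_nu]$) this is an $n$-Engel condition on the subgroup generated by the $u$-values, so by Gruenberg's theorem and the treatment of generalised Engel words in \cite[Section 4]{dks} the word $w$ implies virtual nilpotency. Burns--Medvedev \cite{BuMe03} then gives that $G/w(G)$ is nilpotent-by-finite; fix an open normal $K \le G$ with $K/w(G)$ nilpotent, of class $c$ say, so that $\gamma_{c+1}(K) \le w(G)$.

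The heart of the argument is to use weak rationality of $v$ together with the cardinality bound to force $H=w(G)$ to be torsion. Fix a $w$-value $g = [a,{}_nb]$ with $a \in G_v$ and $b \in G_u$. Because $v$ is weakly rational, passing to finite quotients and taking inverse limits shows $a^e \in G_v$ for every $e \in \hzx$, whence $[a^e,{}_nb] \in G_w$ for all $e \in \hzx$. The continuous map $\hzx \to G$, $e \mapsto [a^e,{}_nb]$, thus has image of cardinality $< 2^{\aleph_0}$, and since an infinite procyclic group has $2^{\aleph_0}$ topological generators, this already forces the relevant powers of $a$ to behave periodically; expanding $[a^e,{}_nb]$ modulo $w(G)$ inside the nilpotent section $K/w(G)$ via commutator calculus, the boundedness of the value set translates into a genuine order bound on $g$. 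Letting $a$ and $b$ range over $G_v$ and $G_u$ then shows that $H$ has finite exponent, and Zelmanov's solution of the restricted Burnside problem gives that the finitely generated profinite group $H$ of finite exponent is finite.

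The main obstacle is precisely this last bridge: the cardinality hypothesis constrains $G_w$, whereas weak rationality is a property of the inner word $v$, so the argument must route the power symmetry $a \mapsto a^e$ through the Engel commutator $[a,{}_nb]$ and convert a purely set-theoretic smallness of $\{[a^e,{}_nb] : e \in \hzx\}$ into an arithmetic order bound. Making this conversion uniform over all $w$-values, and checking it simultaneously in the three cases $u = y^q$, $u=[y_1,y_2]^q$ and $u$ commutator-closed, is where the nilpotent-by-finite structure from Burns--Medvedev must be exploited most carefully.
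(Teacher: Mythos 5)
There is a genuine gap, and it occurs at both of the load-bearing steps of your outline. First, the claim that $w=[v,{}_nu]$ implies virtual nilpotency is false in general. The law $w\equiv 1$ only imposes an Engel-type condition on the specific elements that arise as $v$- and $u$-values, not on arbitrary elements. Concretely, take $u=[y_1,y_2]^q$: in any finitely generated metabelian group whose derived subgroup has exponent dividing $q$ (e.g.\ the lamplighter group $C_2\wr\mathbb{Z}$ for $q=2$), every $u$-value is trivial, so $w\equiv 1$ holds trivially, yet the group is not virtually nilpotent. Hence Burns--Medvedev does not apply, and the nilpotent-by-finite structure of $G/w(G)$ you want to extract is simply not available. (Indeed, if $w$ did imply virtual nilpotency the theorem would already follow from \cite{dks}, so this route cannot capture the new cases the statement is about.)

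Second, even if you had $G/w(G)$ nilpotent-by-finite, it would not help with the ``heart'' of your argument: the element $g=[a,{}_nb]$ whose order you need to bound lies in $w(G)$, so all the elements $[a^e,{}_nb]$ are trivial modulo $w(G)$ and no commutator calculus in the section $K/w(G)$ can see their orders. What is actually needed --- and what the paper spends most of its effort on --- is to show that the subgroup generated by the conjugates $a,a^t,\dots,a^{t^n}$ themselves is nilpotent modulo a \emph{finite} normal subgroup. This requires first making $w(G)$ abelian via the finiteness of $w(G)'$ (Lemma \ref{w(G)' finite}), then a Baire-category argument showing that inverses of $v$-values become $(2n+2)$-Engel modulo a finite subgroup (Lemma \ref{values are engel}), and then the nilpotency criteria of Lemmas \ref{proposition 2.1} and \ref{proposition 2.2} together with Gruenberg's theorem. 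Only inside such a genuinely nilpotent subgroup does the power symmetry $a\mapsto a^i$, $i\in\mathbb{U}$, convert the cardinality bound on $\{w_0(a^i,(a^t)^i,\dots,(a^{t^n})^i)\}$ into finiteness of the order of $[a,{}_nb]$ (Lemma \ref{profinite lemma 10}); and the conclusion is then immediate because $w(G)$ is abelian-by-finite and generated by finitely many torsion elements --- no appeal to Zelmanov, and no finite-exponent claim (which you do not justify), is needed.
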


As mentioned in \cite{dms3} there are many words of the form $[v, {}_n u]$ as above. In particular, the word $[v,{}_n\,y]$, where $v=v(x_1,\dots,x_k)$ is any weakly rational word, is of the required shape. Other obvious examples of words for which Theorem \ref{theorem 2} applies are the words of the form $[v,{}_n\,u]$, where $v=[x_1^q,x_2,...,x_k]$ and $u=\gamma_l$ for $l\leq k$.

\section{Preliminary Results}

Throughout this paper by a subgroup of a profinite group we always mean a closed subgroup. We write $\langle S\rangle$ for the subgroup topologically generated by a set $S$.

Recall that a multilinear commutator word, also known as an outer-commutator word, is obtained  by nesting commutators and using each variable only once. Thus the word $[[x_1,x_2],[x_3,x_4,x_5],x_6]$ is a multilinear commutator word  while the $3$-Engel word $[x,y,y,y]$ is not.  An important family of multilinear commutator words consists of  the repeated commutator words $\gamma_k$ on $k$ variables,  given by $\gamma_1=x_1$ and $\gamma_k=[\gamma_{k-1},x_k]= [x_1,\ldots,x_k]$ for $k \ge 2$. As mentioned in the introduction, the verbal subgroup $\gamma_k(G)$ of a group $G$ is the $k$th term of the lower central series of~$G$. The derived words $\delta_k$, on $2^k$ variables, form another distinguished family of multilinear commutators; they are defined by $\delta_0=x_1$ and $\delta_k=[\delta_{k-1}(x_1,\ldots,x_{2^{k-1}}),
\delta_{k-1}(x_{2^{k-1}+1},\ldots,x_{2^k})]$. The verbal subgroup  $\delta_k(G)=G^{(k)}$ is the $k$th  derived subgroup of $G$.

The following lemma is taken from \cite[Lemma 2.2]{fernandez-morigi}.

\begin{lemma}\label{mcw symmetric}  Let $w$ be a multilinear commutator word and $G$ a group. Then $G_w$ is symmetric, that is, $x\in G_w$ implies that $x^{-1}\in G_w$.
\end{lemma}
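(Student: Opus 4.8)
The plan is to induct on the structure of the multilinear commutator word $w$, exploiting the recursive way in which such words are assembled from the commutator bracket. Write $w$ either as a single variable $x_1$, in which case $G_w=G$ is trivially symmetric, or as $w=[u,v]$, where $u=u(x_1,\dots,x_s)$ and $v=v(x_{s+1},\dots,x_{s+t})$ are multilinear commutator words on disjoint sets of variables. The induction then runs on the total number of variables (equivalently, on the number of brackets occurring in $w$), with the displayed decomposition $w=[u,v]$ supplying the inductive step.

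For the inductive step I would first record the general principle that the set of values of \emph{any} word is invariant under conjugation: for $g\in G$ one has $w(g_1,\dots,g_k)^g=w(g_1^{\,g},\dots,g_k^{\,g})$, simply because conjugation by $g$ is an automorphism of $G$ and the formation of word values commutes with endomorphisms. Next I would use the elementary commutator identity $[a,b]^{-1}=[a^{-1},b]^{a}$, verified in one line by $[a^{-1},b]^{a}=a^{-1}(ab^{-1}a^{-1}b)a=b^{-1}a^{-1}ba=[a,b]^{-1}$.

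Now take an arbitrary element of $G_w$. Since $w=[u,v]$, it has the form $[a,b]$ with $a\in G_u$ and $b\in G_v$. By the induction hypothesis $G_u$ is symmetric, so $a^{-1}\in G_u$; hence $[a^{-1},b]$ is again a $w$-value, namely a value with an element of $G_u$ in its first slot and $b\in G_v$ in its second. Applying the identity above gives $[a,b]^{-1}=[a^{-1},b]^{a}$, and by the conjugation principle this is a $w$-value as well. Thus $g^{-1}\in G_w$, which completes the induction.

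The only point that requires care is that the naive swap $[a,b]^{-1}=[b,a]$ does \emph{not} by itself display $[a,b]^{-1}$ as a $w$-value: since the two components $u$ and $v$ are in general different words acting on disjoint variables, the first entry of a $w$-value must be a $u$-value and the second a $v$-value, so interchanging $a$ and $b$ is illegitimate. The identity $[a,b]^{-1}=[a^{-1},b]^{a}$ circumvents this precisely because it keeps $b$ in the second slot and an element of $G_u$ in the first, at the cost of a conjugation that is harmless thanks to the conjugation-invariance of $G_w$. This is the crux of the argument; everything else is routine bookkeeping in the induction.
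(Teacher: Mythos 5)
Your argument is correct and complete: the induction on the bracket structure of $w=[u,v]$, combined with the identity $[a,b]^{-1}=[a^{-1},b]^{a}$ and the conjugation-invariance of $G_w$, is exactly the standard proof of this fact. The paper itself gives no proof but simply quotes the result from \cite[Lemma~2.2]{fernandez-morigi}, where essentially this same inductive argument appears, so there is nothing to add.
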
 

An element $a$ of a group $G$ is called a right Engel element if, for every $g\in G$, there exists $n=n(a,g)$ such that $[a,{}_n g] = 1$. If $n$ can be chosen independently of $g$, we say that the element $a$ is right $n$-Engel. If $[g, {}_n a] = 1$ holds for all $g \in G$ and some $n = n(a,g)$, the element $a$ is called a (left) Engel element, and we call it (left) $n$-Engel if $n$ can be chosen independently of $g$. The next result is due to Heineken.

\begin{lemma}\label{hnk}\textnormal{\textbf{\cite[12.3.1]{robinson}}}
Let $a$ be a right $n$-Engel element of a group $G$. Then $a^{-1}$ is a left $(n+1)$-Engel element of $G$.
\end{lemma}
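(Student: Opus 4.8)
The plan is to reduce the desired left-Engel condition on $a^{-1}$ to the vanishing of a single nested commutator, and then to extract that vanishing from the hypothesis that the right $n$-Engel law holds for \emph{every} element of $G$. First I would record the elementary conjugacy identity
\[
[g,a^{-1}]=[a,g]^{a^{-1}},
\]
verified at once by writing both sides as $g^{-1}aga^{-1}$. Putting $c_0=g$ and $c_i=[a,c_{i-1}]$ for $i\ge1$, a short induction on $m$ promotes this to
\[
[g,{}_m a^{-1}]=c_m^{\,a^{-m}}\qquad(m\ge1);
\]
the inductive step uses only the identity above together with the fact that powers of $a$ commute with $a$. In particular $[g,{}_{n+1}a^{-1}]$ is a conjugate of the right-nested commutator $c_{n+1}=[a,[a,\dots,[a,g]\dots]]$ in which $a$ occurs $n+1$ times, so it suffices to prove that $c_{n+1}=1$ for all $g\in G$.

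The point I would stress next is that this last statement cannot be a formal identity: as a word in $a$ and $g$ the commutator $c_{n+1}$ has multidegree $(n+1,1)$, whereas a right-Engel value $[a,{}_n h]$ has multidegree $(1,n)$ in $a$ and $h$, so the two are unrelated in the free group. The hypothesis must therefore be used in the strong form ``$[a,{}_n h]=1$ for all $h\in G$''. The device for doing so is polarisation: substituting products $h=h_1\cdots h_r$ into the universal law and collecting produces relations among the mixed left-normed commutators $[a,h_{i_1},\dots,h_{i_n}]$. For $n=2$, for instance, one obtains $[a,x,y]=[a,y,x]^{-1}$, and hence $[a,x,a]=[a,a,x]^{-1}=1$.

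Finally I would establish $c_{n+1}=1$ by induction on $n$. The base case $n=1$ is immediate, since then $a$ is central and $c_2=[a,[a,g]]=1$. For the inductive step the plan is to expand the right-nested commutator $c_{n+1}$, by repeated use of the standard commutator identities, as a product of conjugates of left-normed commutators whose initial entry is $a$, and then to annihilate each factor using the polarised right-Engel relations of the previous paragraph. I expect the decisive difficulty to lie precisely here: organising the collection so that the higher-weight correction terms produced along the way are again of the controlled shape, so that the induction closes. This bookkeeping is the technical heart of Heineken's theorem, and in carrying it out I would follow the classical commutator calculus reproduced in \cite[12.3.1]{robinson}.
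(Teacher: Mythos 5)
Your opening reductions are correct: the identity $[g,a^{-1}]=[a,g]^{a^{-1}}$ and the formula $[g,{}_m a^{-1}]=c_m^{\,a^{-m}}$ both check out, so the lemma is indeed equivalent to the vanishing of the right-normed commutator $c_{n+1}=[a,[a,\dots,[a,g]\dots]]$. But this is only a reformulation, and the step that carries all the content is missing. You propose to get $c_{n+1}=1$ by polarising the diagonal law $[a,{}_n h]=1$ and then ``collecting,'' and you yourself flag this as the decisive difficulty before deferring it to Robinson. That deferral is not legitimate: Robinson's proof of 12.3.1 does not proceed by polarisation, and the polarisation route is genuinely problematic. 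Substituting $h=h_1\cdots h_r$ into a law of a nonabelian group yields the mixed commutators only entangled with higher-weight correction terms, and one cannot in general isolate an individual mixed value such as $[a,x,a]$ from the diagonal identity; already for $n=2$ the relation $[a,x,y]=[a,y,x]^{-1}$ for right $2$-Engel $a$ is a nontrivial theorem of W.~Kappe, not a formal consequence of collection. So the heart of the proof is absent, and the route chosen makes it harder than it is.

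The idea you are missing is that the hypothesis applies not only to $a$ but to every conjugate of $a$: from $[a,{}_n x]=1$ for all $x$ one gets $[a^g,{}_n x]=[a,{}_n x^{g^{-1}}]^g=1$, so $a^g$ is again right $n$-Engel. Now start instead from $[g,a^{-1}]=g^{-1}aga^{-1}=a^g a^{-1}$. Since $[a^g a^{-1},a^{-1}]=[a^g,a^{-1}]^{a^{-1}}$ and conjugation by $a^{-1}$ commutes with further commutation against $a^{-1}$ (because $(a^{-1})^a=a^{-1}$), one obtains $[g,{}_{n+1}a^{-1}]=[a^g,{}_n a^{-1}]^{a^{-1}}$, which is trivial because $a^g$ is right $n$-Engel. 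This is exactly the identity $[x,y^{-1},y^{-1}]=[y^{xy^{-1}},y^{-1}]$ that the paper itself invokes in the proof of Lemma~\ref{values are engel}. The point is that the left-normed shape $[b,{}_n a^{-1}]$ with $b$ a conjugate of $a$ is precisely the shape the hypothesis annihilates, whereas your right-normed $c_{n+1}$ is not; once the first slot is allowed to vary over conjugates of $a$, no induction on $n$, no polarisation and no collection are needed.
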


The next two results are almost obvious. The reader can consult for example Lemma 7 and Lemma 13 in \cite{dms1} for details.

\begin{lemma}\label{powers out}
If $M$ is an abelian normal subgroup of $G$, then for all $g,h \in M$ and $a\in G$ we have $[gh, {}_n a] = [g, {}_n a][h, {}_n a]$. In particular, $[g^i, {}_n a] = [g, {}_n a]^i$ for any integer $i$.
\end{lemma}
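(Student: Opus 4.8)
The plan is to recognise the iterated commutator with $a$ as the $n$-fold application of a single endomorphism of the abelian group $M$. The key point is that, since $M$ is normal in $G$, conjugation by $a$ restricts to an automorphism of $M$; and since $M$ is abelian, this turns the commutator map into something additive.

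First I would observe that for $g \in M$ one has $[g,a] = g^{-1}g^{a}$, and since $M$ is normal in $G$ the element $g^{a}=a^{-1}ga$ again lies in $M$, so $[g,a]\in M$. By induction this shows that every iterated commutator $[g,{}_{k}a]$ stays inside $M$, which is what lets me exploit commutativity freely at each stage.

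Next I would define $\theta\colon M\to M$ by $\theta(g)=[g,a]=g^{-1}g^{a}$ and check that $\theta$ is an endomorphism of the abelian group $M$. Writing $\phi_a$ for the action of conjugation by $a$ on $M$, in additive notation $\theta=\phi_a-\mathrm{id}$, which is manifestly additive; concretely, for $g,h\in M$ one computes $\theta(gh)=(gh)^{-1}(gh)^{a}=g^{-1}g^{a}h^{-1}h^{a}=\theta(g)\theta(h)$, the middle equality using that $M$ is abelian. A composition of endomorphisms is again an endomorphism, so $\theta^{n}$ is an endomorphism of $M$, and by construction $[g,{}_{n}a]=\theta^{n}(g)$. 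Therefore $[gh,{}_{n}a]=\theta^{n}(gh)=\theta^{n}(g)\theta^{n}(h)=[g,{}_{n}a][h,{}_{n}a]$, which is the first assertion.

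Finally, the ``in particular'' clause is immediate from the same endomorphism property: since $\theta^{n}$ is a group homomorphism it sends $i$th powers to $i$th powers, whence $[g^{i},{}_{n}a]=\theta^{n}(g^{i})=\theta^{n}(g)^{i}=[g,{}_{n}a]^{i}$ for every integer $i$, the case of negative $i$ being covered because a homomorphism respects inverses. I do not expect any real obstacle here --- as the surrounding text notes, the statement is almost obvious; the only point requiring a little care is the bookkeeping that keeps all the relevant commutators inside $M$, so that abelianness can legitimately be invoked at every step of the induction on $n$.
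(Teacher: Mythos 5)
Your argument is correct: since $M$ is normal the map $\theta(g)=g^{-1}g^{a}$ sends $M$ into $M$, abelianness makes $\theta$ an endomorphism, and $[g,{}_n a]=\theta^{n}(g)$ then gives both claims at once. The paper itself offers no proof (it calls the statement almost obvious and cites Lemmas 7 and 13 of \cite{dms1}), and your endomorphism argument is exactly the standard one being alluded to there.
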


\begin{lemma}\label{rewrite the word}
Let $w$ be the $n$-Engel word $[x,{}_n y]$. Then, there exists a group word $w_0=w_0(x_1,\dots,x_{n+1})$ such that $w = w_0(x, x^y, \dots, x^{y^{n}})$.
\end{lemma}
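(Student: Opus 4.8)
The plan is to argue by induction on $n$, exploiting the elementary commutator identity $[a,y]=a^{-1}a^y$ together with the observation that conjugation by $y$ acts as a shift on the elements $x,x^y,\dots,x^{y^n}$. Everything here should be read as an identity in the free group on $x$ and $y$: once the claimed equality of words is verified there, it holds in every group via the substitution $x_i\mapsto x^{y^{i-1}}$.

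For the base case $n=1$ I would simply read off that $[x,y]=x^{-1}x^y$, so the word $w_0(x_1,x_2)=x_1^{-1}x_2$ does the job. For the inductive step, suppose $[x,{}_ny]=w_0(x,x^y,\dots,x^{y^n})$ for some word $w_0=w_0(x_1,\dots,x_{n+1})$. Since conjugation by $y$ is an automorphism of the free group and a homomorphism respects products and inverses, applying it to the right-hand side carries each generator $x^{y^i}$ to $x^{y^{i+1}}$; hence $\bigl(w_0(x,x^y,\dots,x^{y^n})\bigr)^y=w_0(x^y,x^{y^2},\dots,x^{y^{n+1}})$. Applying $[a,y]=a^{-1}a^y$ with $a=[x,{}_ny]$ then gives
\[
[x,{}_{n+1}y]=[x,{}_ny]^{-1}\bigl([x,{}_ny]\bigr)^y = w_0(x,x^y,\dots,x^{y^n})^{-1}\,w_0(x^y,\dots,x^{y^{n+1}}),
\]
which is visibly a word in $x,x^y,\dots,x^{y^{n+1}}$. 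Thus the word
\[
\widetilde w_0(x_1,\dots,x_{n+2})=w_0(x_1,\dots,x_{n+1})^{-1}\,w_0(x_2,\dots,x_{n+2})
\]
realises $[x,{}_{n+1}y]$ and has the required number of variables, completing the induction.

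There is no serious obstacle here, which is why the authors call the statement almost obvious. The only points requiring a little care are the bookkeeping of indices under the shift $x^{y^i}\mapsto x^{y^{i+1}}$ and the remark that the identity is established at the level of the free group, so that the substitution $x_i\mapsto x^{y^{i-1}}$ is a genuine homomorphism and the asserted equality is an equality of group words valid in every group. Once these are in place, the conclusion is immediate from the two displayed formulas.
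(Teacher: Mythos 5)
Your proof is correct, and it is essentially the standard argument the paper has in mind (it gives no proof itself, deferring to Lemma 13 of the cited reference, which runs exactly this induction via $[x,{}_{n+1}y]=[x,{}_ny]^{-1}([x,{}_ny])^y$ and the shift under conjugation by $y$). Nothing further is needed.
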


A proof of the following result can be found in \cite[Lemma 4.3]{dks}.

\begin{lemma}\label{w(G)' finite}
Let $G$ be a profinite group and $w$ a group word such that $|G_w| < 2^{\aleph_0}$. If $w(G)$ can be generated by finitely many $w$-values, then $w(G)'$ is finite.
\end{lemma}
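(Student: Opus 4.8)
The plan is to prove that each of the finitely many $w$-value generators of $w(G)$ lies in a finite conjugacy class, to conclude that $w(G)$ is centre-by-finite, and then to apply Schur's theorem. Write $w(G)=\langle g_1,\ldots,g_m\rangle$ with each $g_i\in G_w$. Because the word map is a composition of group operations, the set $G_w$ is invariant under conjugation: if $g=w(a_1,\ldots,a_k)$ then $g^x=w(a_1^x,\ldots,a_k^x)\in G_w$. Hence for every $i$ the entire conjugacy class $g_i^{\,G}$ lies in $G_w$, so $|g_i^{\,G}|\leq|G_w|<2^{\aleph_0}$.

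The decisive step is a cardinality dichotomy for conjugacy classes: a conjugacy class $g^G$ in a profinite group is either finite or of cardinality at least $2^{\aleph_0}$. By orbit--stabiliser, $g^G$ is in bijection with the coset space $G/C_G(g)$, where $C_G(g)=\{x:[g,x]=1\}$ is closed; identify it with $\varprojlim_N G/C_G(g)N$ over the open normal subgroups $N$, so that each $G/C_G(g)N$ is a finite transitive $G$-set. The crucial feature is homogeneity: since $G$ acts transitively on each level, the fibres of every bonding map all have equal size. Thus, if $[G:C_G(g)]$ is infinite the indices $[G:C_G(g)N]$ are unbounded, and one may choose a descending chain $N_1\supseteq N_2\supseteq\cdots$ along which these indices strictly increase; by equality of fibre sizes each bonding map then has branching factor at least $2$. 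The inverse limit $Y=\varprojlim_j G/C_G(g)N_j$ therefore has at least $2^{\aleph_0}$ branches, and $G/C_G(g)$ surjects onto $Y$, whence $|g^G|\geq 2^{\aleph_0}$. This contradicts the bound above, so every $C_G(g_i)$ has finite index and is therefore open.

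The remainder is routine. The intersection $C=\bigcap_{i=1}^m C_G(g_i)$ is an open subgroup of finite index centralising each $g_i$; as the centraliser of a set is closed, $C$ centralises the closed subgroup $w(G)=\langle g_1,\ldots,g_m\rangle$. Hence $C\leq C_G(w(G))$, so $C_G(w(G))$ has finite index, and consequently $w(G)/Z(w(G))\cong w(G)C_G(w(G))/C_G(w(G))\leq G/C_G(w(G))$ is finite. By the classical theorem of Schur, a group whose centre has finite index has finite derived subgroup; applied to $w(G)$ this gives that $w(G)'$ is finite, and being finite it is automatically closed, in agreement with the topological meaning of $w(G)'$.

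I expect the homogeneity argument of the second paragraph to be the only real obstacle, because the naive assertion ``a compact subset of cardinality $<2^{\aleph_0}$ is finite'' is false --- a convergent sequence together with its limit is a countable compact counterexample. It is precisely the transitive $G$-action on the coset space, forcing uniform branching at every level, that upgrades ``infinite'' to ``at least $2^{\aleph_0}$''. Note that the bound $|G_w|<2^{\aleph_0}$ is used only through this dichotomy, while the finite-generation hypothesis enters only to make the intersection of the centralisers of finite index.
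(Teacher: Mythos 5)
Your proof is correct, and in fact the paper gives no argument of its own for this lemma, deferring to \cite[Lemma 4.3]{dks}; your argument is essentially the one found there. The route is the standard one: each generating $w$-value has its conjugacy class inside $G_w$, the dichotomy that a coset space $G/C$ of a closed subgroup of infinite index in a profinite group has cardinality at least $2^{\aleph_0}$ (which you correctly reprove via the homogeneity/uniform-branching argument, avoiding the convergent-sequence pitfall you flag) forces the centralizers to be open, and intersecting them shows $w(G)$ is centre-by-finite, so Schur's theorem finishes, with the abstract derived subgroup finite and hence closed.
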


Recall that the word $w$ is weakly rational if, whenever $G$ is a finite group, the set $G_w$ is closed under taking $e$-th powers for every integer $e$ coprime to $|G|$. According to \cite[Lemma 1]{gushu} this is equivalent to saying that whenever $x$ is a $w$-value of $G$, the element $x^i$ is again a $w$-value for every integer $i$ coprime to the order of $x$. Of course, $w$ is weakly rational if and only if whenever $x\in G_w$, for a finite group $G$, all generators of the cyclic subgroup $\langle x\rangle$ are contained in $G_w$. 

Throughout, $\zp$ stands for the infinite procyclic pro-$p$ group (the additive group of $p$-adic integers) and $\hz=\prod_p\zp$ for the free procyclic group. We say that an element $i\in\hz$ is a generator if $\langle i\rangle=\hz$. The following lemma is straightforward using the routine inverse limit argument.
\begin{lemma}\label{weakly rational words in profinite groups}
Let $w$ be a weakly rational word and $G$ a profinite group. Suppose that $a$ is a $w$-value in $G$. If $i$ is any generator of $\hz$, then the power $a^i$ is again a $w$-value in $G$.
\end{lemma}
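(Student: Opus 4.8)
The plan is to reduce the statement to the finite quotients of $G$ and then reconstruct the desired $w$-value by a compactness (inverse limit) argument. First I would recall that for a procyclic element $a$ of a profinite group the power $a^i$ with $i\in\hz$ is well defined: the closed subgroup $\overline{\langle a\rangle}$ is a quotient of $\hz$ via the map sending $1\mapsto a$, and $a^i$ is by definition the image of $i$ under this map. The crucial arithmetic observation is that, since $i$ generates $\hz$, its image in $\z/m\z$ is a unit for every positive integer $m$; equivalently, $i$ reduces modulo $m$ to an integer coprime to $m$.

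Next I would fix an arbitrary open normal subgroup $N\trianglelefteq G$ and work in the finite quotient $G/N$. Since $w$-values are preserved under homomorphisms, the image $aN$ is a $w$-value in $G/N$. If $d$ denotes the order of $aN$, then the image of $a^i$ under the projection $G\to G/N$ equals $(aN)^{i\bmod d}=a^iN$. By the arithmetic observation above, $i\bmod d$ is coprime to $d$, which is precisely the order of the $w$-value $aN$. Invoking the characterization of weak rationality from \cite[Lemma 1]{gushu}, the power $(aN)^{i\bmod d}=a^iN$ is therefore again a $w$-value in $G/N$.

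Finally I would assemble these local solutions into a global one. Writing $w=w(x_1,\dots,x_k)$, for each open normal $N$ let $X_N\subseteq (G/N)^k$ be the set of tuples mapped to $a^iN$ by $w$; by the previous paragraph each $X_N$ is a nonempty finite set. The projections $G/N'\to G/N$ for $N'\leq N$ induce transition maps $X_{N'}\to X_N$, so that $\{X_N\}$ becomes an inverse system of nonempty finite sets. Its inverse limit is consequently nonempty, and any element of $\varprojlim X_N$ yields a tuple $(g_1,\dots,g_k)\in G^k=(\varprojlim G/N)^k$ with $w(g_1,\dots,g_k)=a^i$, since $G=\varprojlim G/N$ separates points. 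This exhibits $a^i$ as a $w$-value, completing the argument.

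The only point requiring genuine care is the inverse limit step: one must verify that the transition maps $X_{N'}\to X_N$ are well defined and compatible, so that the standard fact that an inverse limit of nonempty finite sets is nonempty applies. Everything else is a routine transfer of the weak rationality of $w$ from abstract finite groups to the finite continuous quotients of $G$.
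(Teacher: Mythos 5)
Your proof is correct and is exactly the ``routine inverse limit argument'' that the paper invokes without writing out: reduce to finite quotients via the coprimality of $i \bmod d$ with $d$, apply the characterization of weak rationality from \cite[Lemma 1]{gushu}, and lift back using the nonemptiness of an inverse limit of nonempty finite sets. No gaps; this matches the intended argument.
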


Given a subgroup $K$ and an element $a$ of a group $G$, we denote by $[K,a]$ the subgroup of $G$ generated by all commutators $[g,a]$ where $g\in K$. By induction we define $[K,{}_1a]=[K,a]$ and $[K,{}_na]=[[K,{}_{n-1}a],a]$ for $n\geq2$. Of course, if $K$ is an abelian normal subgroup, then $[K,{}_na]$ coincides with all elements of the form $[g,{}_na]$ where $g\in K$, by Lemma \ref{powers out}. 

\begin{lemma}\label{[w(G), {}_n x] finite exponent}
Let $v=v(x_1,\dots,x_k)$ be any group word and $w = [x, {}_n v]$. Let $G$ be a profinite group such that $|G_w|<2^{\aleph_0}$ and $w(G)$ is generated by finitely many $w$-values. Then, for any $v$-value $a$, the subgroup $[w(G), {}_na]$ has finite exponent. 
\end{lemma}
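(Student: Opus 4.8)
The plan is to reduce to the abelianisation of $w(G)$ and then exploit the cardinality hypothesis through the fact that an infinite profinite group necessarily has cardinality at least $2^{\aleph_0}$. Since $w$ is a word, $w(G)$ is normal in $G$, and by Lemma \ref{w(G)' finite} the subgroup $w(G)'$ is finite; being characteristic in $w(G)$ it is normal in $G$. Passing to the quotient $\bar G = G/w(G)'$, the image $M = w(G)/w(G)'$ is an abelian normal subgroup of $\bar G$. Fix a $v$-value $a$ and write $\bar a$ for its image; conjugation by $\bar a$ is a continuous automorphism $\alpha$ of $M$.

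The key step is to compute $[M, {}_n\bar a]$. Because $M$ is abelian and normal, Lemma \ref{powers out} shows that the map $m \mapsto [m, {}_n\bar a] = (\alpha - 1)^n(m)$ is a continuous endomorphism of $M$; hence the set $\{[m, {}_n\bar a] : m \in M\}$ is already a (closed) subgroup and coincides with $[M, {}_n\bar a]$. This is the crucial point, since the subgroup $[w(G), {}_n a]$ inside $G$ is only \emph{generated} by such commutators and one cannot bound its cardinality directly; passing to the abelian quotient is precisely what turns the commutator set into a subgroup on the nose.

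Now I would bound the cardinality of $[M, {}_n\bar a]$. Each of its elements has the form $[m, {}_n\bar a]$ with $m \in M$; lifting $m$ to some $g \in w(G)$, the element $[g, {}_n a]$ is a $w$-value (here $g\in G$ and $a$ is a $v$-value), and its image in $\bar G$ equals $[m, {}_n\bar a]$. Thus $[M, {}_n\bar a]$ is contained in the image of $G_w$ under $G \to \bar G$, so $\lvert [M, {}_n\bar a] \rvert \le \lvert G_w \rvert < 2^{\aleph_0}$. Since $[M, {}_n\bar a]$ is a profinite (abelian) group and an infinite profinite group has cardinality at least $2^{\aleph_0}$, it must be finite; in particular it has some finite exponent $e$.

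Finally I would transfer this back to $G$. The image of $[w(G), {}_n a]$ under $G \to \bar G$ is exactly $[M, {}_n\bar a]$, so $[w(G), {}_n a]^{e} \subseteq w(G)'$. As $w(G)'$ is finite, it has finite exponent $f$, whence $[w(G), {}_n a]^{ef} = 1$ and $[w(G), {}_n a]$ has finite exponent (indeed it is even finite). The only real obstacle is the middle step: recognising that modulo $w(G)'$ the iterated commutator subgroup is the image of the endomorphism $(\alpha-1)^n$, so that the cardinality hypothesis on $G_w$ applies to the whole of $[M, {}_n\bar a]$ rather than merely to a generating set.
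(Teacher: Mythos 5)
Your proof is correct, but it takes a genuinely different route from the paper's. The paper argues in two stages: first, for each fixed $g\in w(G)$ it shows that every power $[g,{}_na]^i=[g^i,{}_na]$ is again a $w$-value, so the procyclic subgroup generated by $[g,{}_na]$ lies in the closed set $G_w$ and must be finite; then it upgrades ``every element of $[w(G),{}_na]$ has finite order'' to a uniform exponent by applying the Baire category theorem to the closed cover $W_j=\{g\in w(G): [g,{}_na]^j=1\}$, extracting an open coset $gS$ with bounded exponent and translating back via Lemma \ref{powers out} and the finite index of $S$. You instead apply the cardinality dichotomy once, globally: since $g\mapsto[g,{}_na]$ is a continuous endomorphism of the abelian normal subgroup $M=w(G)/w(G)'$ (again by Lemma \ref{powers out}), its image is the whole of $[M,{}_n\bar a]$, a compact hence closed subgroup consisting entirely of images of $w$-values, and a profinite group of cardinality less than $2^{\aleph_0}$ is finite. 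This bypasses Baire category entirely and in fact yields the stronger conclusion that $[w(G),{}_na]$ is finite, not merely of finite exponent. Both arguments rest on the same two ingredients --- the linearity supplied by Lemma \ref{powers out} and the fact that ``small'' closed subsets of profinite groups are finite (the paper invokes it for procyclic subgroups, you for the image subgroup) --- together with the reduction to abelian $w(G)$ via Lemma \ref{w(G)' finite}; you simply package them more efficiently by recognising the set of iterated commutators as a subgroup before counting.
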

\begin{proof}
We know from Lemma \ref{w(G)' finite} that $w(G)'$ is finite. Taking the quotient over this subgroup we can assume that $w(G)$ is abelian. Choose $g\in w(G)$. For any integer $i$, by Lemma \ref{powers out}, we have  $[g, {}_n a]^i = [g^i, {}_n a]$. So $[g, {}_n a]^i$ is a $w$-value. Since the set $G_w$ of $w$-values is closed, the procyclic subgroup generated by $[g, {}_n a]$ is contained in $G_w$, and the assumption that $|G_w| < 2^{\aleph_0}$ yields that $[g, {}_n a]$ has finite order. This happens for any $g\in w(G)$.

Now, for each positive integer $j$, define $$W_j = \{g\in w(G) \, | \, [g, {}_n a]^j = 1\}.$$ The sets $W_j$ are closed and, by the previous paragraph, cover $w(G)$. The Baire Category Theorem (\cite[p. 200]{kelley}) tells us that at least one of these sets has non-empty interior. Therefore there is an index $m$, an element $g\in w(G)$ and an open subgroup $S\leq w(G)$ such that $gS\subseteq W_m$. In particular, $[g, {}_n a]^m = 1$. Taking into account that $w(G)$ is abelian, we deduce from Lemma \ref{powers out} that, for every $s\in S$, $$1 = [gs, {}_na]^m = [g, {}_na]^m [s, {}_na]^m = [s, {}_na]^m.$$ Therefore $[S, {}_n a]^m = 1$. Let $[w(G):S] = l$. We see that $h^l$ belongs to $S$ for all $h \in w(G)$. Therefore $[w(G), {}_n a]^{lm} = 1$, as required. 
\end{proof}

\begin{corollary}\label{28}
Assume the hypothesis of Lemma \ref{[w(G), {}_n x] finite exponent}.  If $a_1,\dots,a_k$ are finitely many $v$-values, then there exists a finite normal subgroup $T$ of $G$ such that the images of $a_1,\dots,a_k$ in $G/T$ are $2n$-Engel.
\end{corollary}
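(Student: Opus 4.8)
The plan is to pass to a quotient in which $w(G)$ becomes abelian and topologically finitely generated, build there a suitable finite normal subgroup, and then pull it back. First I would invoke Lemma~\ref{w(G)' finite} to conclude that $w(G)'$ is finite; being characteristic in the normal subgroup $w(G)$, it is normal in $G$, so it will be harmless to absorb it into the final subgroup $T$. I therefore work in $\bar G=G/w(G)'$, where $\overline{w(G)}$ is abelian. The crucial structural point is that $\overline{w(G)}$ is topologically generated by finitely many elements, namely the images of the finitely many $w$-values generating $w(G)$. Hence $\overline{w(G)}$ is a continuous image of $\hz^{\,r}$ for some $r$, and consequently every closed subgroup of $\overline{w(G)}$ of finite exponent is finite, because a finite-exponent quotient of $\hz^{\,r}$ factors through $(\hz/e\hz)^r$, which is finite.

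The two inputs I would combine are the following. First, for each $v$-value $a_i$ the subgroup $[w(G),{}_n a_i]$ has finite exponent by Lemma~\ref{[w(G), {}_n x] finite exponent}. Second, since every $[g,{}_n a_i]$ is a $w$-value, we have $[G,{}_n a_i]\le w(G)$, whence $[G,{}_{2n}a_i]=[[G,{}_n a_i],{}_n a_i]\le[w(G),{}_n a_i]$. Thus making the image of $a_i$ a $2n$-Engel element of a quotient amounts exactly to killing $[w(G),{}_n a_i]$ in that quotient.

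By the first input together with the structural remark above, each $[\overline{w(G)},{}_n\bar a_i]$ is finite; but these subgroups need not be normal in $\bar G$, and reconciling finiteness with normality is the main obstacle. I would resolve it by passing to the normal closure: set $\bar T=\langle\,[\overline{w(G)},{}_n\overline{a_i^{\,g}}]:1\le i\le k,\ g\in G\,\rangle$, which is manifestly $\bar G$-invariant because conjugation permutes the generating subgroups (using $a_i^{\,g}\in G_v$ and $\overline{w(G)}^{\,\bar g}=\overline{w(G)}$). The key observation keeping this finite is that the exponent stays uniformly bounded across the orbit: conjugation by $\bar g$ is an automorphism of $\bar G$ carrying $[\overline{w(G)},{}_n\bar a_i]$ isomorphically onto $[\overline{w(G)},{}_n\overline{a_i^{\,g}}]$, so every conjugate shares the exponent $e_i$ of $[\overline{w(G)},{}_n\bar a_i]$. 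Since $\overline{w(G)}$ is abelian, $\bar T$ then has exponent dividing $e=\mathrm{lcm}(e_1,\dots,e_k)$, and by the structural remark $\bar T$ is finite.

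Finally I would let $T$ be the preimage of $\bar T$ in $G$; it is a finite normal subgroup of $G$, being an extension of $\bar T$ by $w(G)'$. By the second input the image of $[G,{}_{2n}a_i]$ in $\bar G$ lies in $[\overline{w(G)},{}_n\bar a_i]\le\bar T$, so $[G,{}_{2n}a_i]\le T$ for every $i$; equivalently $[xT,{}_{2n}a_iT]=T$ for all $x\in G$, which says precisely that the images of $a_1,\dots,a_k$ in $G/T$ are $2n$-Engel. The one genuinely non-formal step is the reconciliation of finiteness with normality in the third paragraph: it is the uniform exponent bound over the $G$-orbit of each $a_i$, coming from conjugation being an automorphism that preserves $w(G)$ and permutes $v$-values, that prevents the normal closure from becoming infinite.
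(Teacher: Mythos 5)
Your proof is correct and follows essentially the same route as the paper: both rest on Lemma~\ref{[w(G), {}_n x] finite exponent} together with the fact that a closed subgroup of bounded exponent in the (topologically) finitely generated abelian group $w(G)/w(G)'$ must be finite. The only difference is how normality of $T$ is secured: you form the normal closure of the subgroups $[\overline{w(G)},{}_n\bar a_i]$ and check that the exponent stays uniformly bounded over conjugates, whereas the paper takes the subgroup of $w(G)$ generated by all elements of order at most $m$ (for $m$ a common multiple of the exponents of the $[\overline{w(G)},{}_n \bar a_i]$), which contains all of these subgroups and, being characteristic in $w(G)$, is normal in $G$ for free.
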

\begin{proof} Assume, by Lemma \ref{w(G)' finite}, that $w(G)$ is abelian. By the previous lemma, $[w(G), {}_n a_i]$ has finite exponent. Choose a positive integer $m$ such that the exponent of $[w(G), {}_n a_i]$ divides $m$ for each $i=1,\dots,k$. Let $T$ be the subgroup of $w(G)$ generated by all elements having finite order at most $m$. Being $w(G)$ abelian and finitely generated, all of its subgroups can be generated by the same number of elements. As $T$ is finitely generated and has exponent at most $m$, we see that all finite images of it have bounded order, and we conclude that $T$ must be finite. The images of $a_1,\dots,a_k$ in $G/T$ are $2n$-Engel, as claimed. 
\end{proof}

The next lemma is closely related to \cite[Lemma 3.7]{dms3}.

\begin{lemma}\label{values are engel}
Let $w = [v, {}_n u]$, where $v$ and $u$ are group words such that in any group all $v^{-1}$-values are also $u$-values. Let $G$ be a profinite group in which $|G_w| < 2^{\aleph_0}$ and $w(G)$ is generated by finitely many $w$-values. Choose finitely many $v$-values $a_1, \dots, a_k\in G$. There exists a finite characteristic subgroup $T$ of $w(G)$ such that in $G/T$ the elements $a_1^{-1}, \dots, a_k^{-1}$ are $(2n+2)$-Engel.
\end{lemma}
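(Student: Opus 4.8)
The plan is to show that, after reducing to the case where $w(G)$ is abelian, each $a_i^{-1}$ satisfies the Engel identity because iterated commutation by $a_i^{-1}$ both lands inside $w(G)$ quickly and becomes trivial there modulo a finite subgroup. Throughout I write $a=a_i$ and record two consequences of the hypotheses: since word values are closed under conjugation, every conjugate $a^{g}$ is a $v$-value; and since $a^{-1}$ is a $v^{-1}$-value, the assumption forces $a^{-1}$ (and each $(a^{g})^{-1}$) to be a $u$-value. Hence $[a^{g},{}_{n}a^{-1}]$ is a genuine $w$-value for every $g$.

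First I would dispose of the commutator subgroup: by Lemma \ref{w(G)' finite}, $w(G)'$ is finite and characteristic, so passing to $G/w(G)'$ I may assume $w(G)$ is abelian, and a finite characteristic subgroup found in the quotient pulls back to a finite characteristic subgroup of $w(G)$ containing $w(G)'$. The engine of the argument is the elementary identity $[x^{s},s]=[x,s]^{s}$. Applied with $s=a^{-1}$, together with $[g,a^{-1}]=a^{g}a^{-1}$, it yields by induction that for every $g\in G$ and every $m\ge 2$ one has $[g,{}_{m}a^{-1}]=[a^{g},{}_{m-1}a^{-1}]^{a^{-1}}$. Taking $m=n+1$ gives the bootstrap step: $[g,{}_{n+1}a^{-1}]=[a^{g},{}_{n}a^{-1}]^{a^{-1}}$ is a conjugate of a $w$-value, so $[g,{}_{n+1}a^{-1}]\in w(G)$ for all $g\in G$.

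The crucial point is to prove that $[w(G),{}_{n+1}a^{-1}]$ has finite exponent. Here the method of Lemma \ref{[w(G), {}_n x] finite exponent} does not apply verbatim, since the word $[v,{}_n u]$ has no free first variable and so $[c,{}_{n+1}a^{-1}]$ for $c\in w(G)$ is not itself a $w$-value. The identity above rescues the situation: for $c\in w(G)$ and any $l$, Lemma \ref{powers out} gives $[c,{}_{n+1}a^{-1}]^{l}=[c^{l},{}_{n+1}a^{-1}]=[a^{c^{l}},{}_{n}a^{-1}]^{a^{-1}}$, and the inner term is a $w$-value because $a^{c^{l}}$ is a $v$-value. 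Thus the whole procyclic subgroup generated by $[c,{}_{n+1}a^{-1}]$ lies in the single conjugate $(G_w)^{a^{-1}}$ of $G_w$, a set of cardinality $<2^{\aleph_0}$, and therefore $[c,{}_{n+1}a^{-1}]$ has finite order for every $c\in w(G)$. A Baire category argument identical to the one in Lemma \ref{[w(G), {}_n x] finite exponent} then upgrades this pointwise finiteness of order to a finite exponent for the subgroup $[w(G),{}_{n+1}a^{-1}]$.

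Finally I would assemble $T$ exactly as in Corollary \ref{28}: choosing $m$ divisible by the exponents of $[w(G),{}_{n+1}a_i^{-1}]$ for all $i$, let $T$ be the characteristic subgroup of $w(G)$ collecting the bounded torsion, which is finite because $w(G)$ is finitely generated abelian. Then $[w(G),{}_{n+1}a_i^{-1}]=1$ in $G/T$, and combined with the bootstrap step this gives, for all $g$, $[g,{}_{2n+2}a_i^{-1}]=\big[[g,{}_{n+1}a_i^{-1}],{}_{n+1}a_i^{-1}\big]\in[w(G),{}_{n+1}a_i^{-1}]=1$, so each $a_i^{-1}$ is $(2n+2)$-Engel in $G/T$. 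The main obstacle is precisely this finite-exponent step, and the decisive idea is the rewriting $[c,{}_{n+1}a^{-1}]=[a^{c},{}_{n}a^{-1}]^{a^{-1}}$, which restores the closure under the relevant powers that the absence of a free first variable would otherwise destroy.
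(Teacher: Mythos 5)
Your proposal is correct and follows essentially the same route as the paper: the same commutator identity (the paper phrases it as $[x,y^{-1},y^{-1}]=[y^{xy^{-1}},y^{-1}]$, you as $[g,{}_{m}a^{-1}]=[a^{g},{}_{m-1}a^{-1}]^{a^{-1}}$) to place $[g,{}_{n+1}a^{-1}]$ among conjugates of $w$-values, the same Baire-category argument to get finite exponent for $[w(G),{}_{n+1}a^{-1}]$, and the same bounded-torsion subgroup $T$ as in Corollary \ref{28}. The only cosmetic difference is that the paper packages the rewriting as an auxiliary word $w_0=[y,{}_{n+1}v^{-1}]$ and reuses Lemma \ref{[w(G), {}_n x] finite exponent} for it, whereas you run the argument directly inside the conjugate set $(G_w)^{a^{-1}}$.
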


\begin{proof} Recall a commutator identity $[x, y^{-1}, y^{-1}] = [y^{xy^{-1}}, y^{-1}]$ that holds in any group. Thus, if $a$ is a $v$-value in $G$, then for every $g\in G$ we have
$$[g, {}_{n+1} a^{-1}] = [a^{ga^{-1}}, {}_n a^{-1}] \in G_w.$$ Here we used the fact that $a^{-1}$ belongs to $G_u$. Hence for any $g\in G$, the element $[g, {}_{n+1} a^{-1}]$ is a $w$-value. Let $w_0$ be the group word $[y, {}_{n+1} v^{-1}]$, where $y$ is an independent variable. Note that all $w_0$-values are also $w$-values in any group. In particular, $w_0(G)$ is a subgroup of $w(G)$. Because of Lemma \ref{w(G)' finite}, without loss of generality we can assume that $w(G)$ is abelian. It follows that also $w_0(G)$ is abelian, and $|G_{w_0}| < 2^{\aleph_0}$. Thus, we mimic the proof of Lemma \ref{[w(G), {}_n x] finite exponent} with respect to the word $w_0$ and the subgroup $w(G)$ and conclude that $[w(G), {}_{n+1} a^{-1}]$ has finite exponent $m$. In particular, since $[g, {}_{n+1} a^{-1}]$ belongs to $w(G)$ for all $g \in G$, we have $[g, {}_{2n+2} a^{-1}]^{m} = 1$. Let $T_m$ be the subgroup of $w(G)$ generated by all elements of order at most $m$. Since $w(G)$ is a finitely generated abelian subgroup, it follows that $T_m$ is finite (and, of course, $T_m$ is normal in $G$). Thus, we have shown that for each $a\in G_v$ there is a finite normal subgroup, say $T_a$, contained in $w(G)$, modulo which the element $a^{-1}$ is $(2n+2)$-Engel. Note that the inverses of the given $v$-values $a_1,\dots,a_k$ are $(2n+2)$-Engel modulo the product $T = T_{a_1} T_{a_2} \dots T_{a_k}$. This completes the proof.
\end{proof}
The next result can be found in \cite[Proposition 1]{dms1} (see also \cite{stt}). 

\begin{lemma}\label{local nilpotency mcw-power}
Let $n, k, q$ be positive integers and let $v$ be a multilinear commutator word. Let $G$ be a finite group in which all $v^q$-values are $n$-Engel. If $K$ is a subgroup of $G$ generated by $k$ $v^q$-values, then $K$ is nilpotent of $(k,n,q)$-bounded class. 
\end{lemma}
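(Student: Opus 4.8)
The plan is to attack this by Lie-ring methods and to reduce it, through the associated Lie algebra, to a nilpotency theorem of Zelmanov type. First I would reduce to finite $p$-groups. Since $v$ is a multilinear commutator word, Lemma~\ref{mcw symmetric} gives that $G_{v^q}$ is symmetric, so each of the $k$ generating $v^q$-values and its inverse is a $v^q$-value, hence $n$-Engel; combined with Lemma~\ref{hnk} this makes each generator a bounded left and right Engel element of $G$. By Baer's theorem every bounded left Engel element of a finite group lies in the Fitting subgroup (see \cite{robinson}), so $K\le F(G)$ and $K$ is nilpotent. Hence $K$ is the direct product of its Sylow subgroups and its nilpotency class equals the maximum of the classes of the $K_p$; it therefore suffices to bound the class of a single Sylow $p$-subgroup $P=K_p$ uniformly in $p$. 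Now $P$ is generated by the at most $k$ $p$-parts $a_{1,p},\dots,a_{k,p}$ of the generators, each $a_{i,p}$ being a power of some $a_i$.

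Next I would pass to the restricted Lie algebra $L=L(P)$ over $\mathbb{F}_p$ associated with the Jennings--Lazard dimension-subgroup filtration of $P$, so that the images of $a_{1,p},\dots,a_{k,p}$ generate $L$. The Lazard correspondence turns the group Engel condition into ad-nilpotency: because each $a_i$ is $n$-Engel, its image in $L$ is ad-nilpotent of index bounded in terms of $n$; and since the identity $\operatorname{ad}(x^{[p]})=(\operatorname{ad}x)^{p}$ holds in a restricted Lie algebra, the image of any power $a_{i,p}$ is again ad-nilpotent of index bounded by the same function of $n$, independently of $p$. Thus $L$ is generated by at most $k$ elements that are ad-nilpotent of $n$-bounded index.

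To invoke Zelmanov's machinery one also needs a polynomial identity, and this is where the multilinear commutator structure of $v$ enters. The word $v$ determines a nonzero multilinear Lie polynomial $\hat v$, and the images of the $v^q$-values are, up to the $p$-power map, values of $\hat v$ in $L$. The hypothesis that \emph{all} $v^q$-values of $G$ are $n$-Engel translates into the statement that every $\hat v$-value of $L$ is ad-nilpotent of bounded index, which is precisely a condition forcing a PI. By the Zelmanov-type theorem used in this setting --- a Lie algebra generated by finitely many ad-nilpotent $\hat v$-values, all of whose $\hat v$-values are ad-nilpotent of bounded index, is nilpotent of bounded class (cf.\ \cite{stt}) --- the algebra $L$ is nilpotent of $(k,n,q)$-bounded class, and the usual comparison of the grading of $L$ with the lower central series of $P$ yields the same bound for $P$.

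The step I expect to be the main obstacle is the faithful, $p$-uniform translation of the group hypotheses into Lie-theoretic ones, and in particular the bookkeeping around the exponent $q$. When $p\nmid q$ the elements $a_i$ and $a_i^{\,q}$ generate the same cyclic $p$-subgroup, so one may work directly with $\hat v$-values; when $p\mid q$ the $v^q$-values lie deeper in the filtration and correspond to $\hat v$-values twisted by the restricted $p$-power map, so one must argue inside the restricted Lie algebra and verify that both the ad-nilpotency index and the final class bound remain independent of $p$. Securing this uniformity, together with the precise form of the Lazard correspondence that converts the $n$-Engel identity into ad-nilpotency of controlled index, is the technical heart of the argument.
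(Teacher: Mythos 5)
First, a point of reference: the paper does not actually prove this lemma --- it is quoted from \cite[Proposition 1]{dms1} (see also \cite{stt}) --- so your proposal has to be measured against the argument given in those sources. Your overall architecture matches it: the generating $v^q$-values are left $n$-Engel (and $G_{v^q}$ is symmetric by Lemma~\ref{mcw symmetric}), so by Baer's theorem they lie in the Fitting subgroup, $K$ is nilpotent, and one reduces to bounding the class of each Sylow $p$-subgroup via the Lie algebra of the Zassenhaus--Jennings--Lazard filtration and a Zelmanov-type nilpotency theorem. One small correction on the reduction: the $p$-part $a_{i,p}$ is not a $p$-th power of $a_i$, so the identity $\mathrm{ad}(x^{[p]})=(\mathrm{ad}\,x)^p$ is not the relevant tool; the clean observation is that the $p'$-part of $a_i$ centralizes $K_p$, whence $[x,{}_n a_{i,p}]=[x,{}_n a_i]=1$ for all $x\in K_p$ and $a_{i,p}$ is itself left $n$-Engel on $P=K_p$.

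The genuine gap is in the Lie-theoretic core. The version of Zelmanov's theorem used in \cite{stt,dms1} requires two things: that $L$ satisfy a polynomial identity, and that \emph{every Lie commutator in the generators} --- not merely the generators themselves --- be ad-nilpotent of bounded index. You verify ad-nilpotency only for the generators, and you manufacture the PI by asserting that ``every $\hat v$-value of $L$ is ad-nilpotent of bounded index.'' That assertion is exactly the hard part and does not follow from the group hypothesis by any direct translation: values of the multilinear Lie polynomial $\hat v$ on $L$ are not images of group $v^q$-values in any straightforward sense, so the theorem you invoke has unverified hypotheses. In the actual proof the PI comes from the Wilson--Zelmanov theorem applied to the group law $[y,{}_n v(x_1,\dots,x_r)^q]\equiv 1$, which $P$ inherits from $G$; and the ad-nilpotency of all commutators in the generators is obtained by showing that a suitable power of every group commutator in the generating $v^q$-values (with $p$-part bounded in terms of $q$) is again $n$-Engel --- this is where the combinatorics of multilinear commutator words genuinely enters, together with a lemma converting ``$b^{p^e}$ is left $n$-Engel'' into ``$\tilde b$ is ad-nilpotent of index at most $np^e$.'' Until these two ingredients are supplied, the appeal to Zelmanov's machinery does not close.
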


The next result is straightforward from \cite[Lemma 4.1]{danilo_shumyatsky}.
\begin{lemma}\label{gruenberg}
Let $G$ be a soluble profinite group, generated by finitely many $n$-Engel elements. Then $G$ is nilpotent.
\end{lemma}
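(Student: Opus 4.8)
The plan is to reduce the assertion to its finite continuous quotients and then to feed these into \cite[Lemma 4.1]{danilo_shumyatsky}, which supplies exactly the quantitative input needed for the inverse limit. Write $G=\langle a_1,\dots,a_k\rangle$, where each $a_i$ is a left $n$-Engel element, so that $[g,{}_n a_i]=1$ for every $g\in G$. For an open normal subgroup $N$ of $G$ the quotient $G/N$ is a finite soluble group generated by the images $\bar a_1,\dots,\bar a_k$, and each $\bar a_i$ is again $n$-Engel, since the relation $[g,{}_n a_i]=1$ is preserved by the projection $G\to G/N$ and every element of $G/N$ is of the form $\bar g$.

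First I would apply \cite[Lemma 4.1]{danilo_shumyatsky} to each $G/N$ separately. The point is that a finite soluble group generated by $k$ left $n$-Engel elements is nilpotent of class at most some $c=c(k,n)$ that depends only on $k$ and $n$, and in particular not on $N$. Granting this, the lower central word $\gamma_{c+1}$ is a law in every $G/N$, i.e. $\gamma_{c+1}(G/N)=1$ for all open normal $N$.

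The conclusion is then immediate from the inverse-limit description $G=\varprojlim_N G/N$: for any $(c+1)$-tuple of elements of $G$ the corresponding value of $\gamma_{c+1}$ maps to $1$ in each $G/N$, hence lies in $\bigcap_N N=1$. Therefore $\gamma_{c+1}(G)=1$ and $G$ is nilpotent, indeed of class at most $c$. Note that it is essential to have the \emph{uniform} bound $c(k,n)$ here: a non-uniform bound would only yield that $G$ is pronilpotent rather than nilpotent of finite class.

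The only real work is concealed in the uniform class bound for the finite quotients, which is precisely what \cite[Lemma 4.1]{danilo_shumyatsky} delivers. Were one to prove it from scratch, the natural starting point is Baer's theorem, by which every left Engel element of a finite group lies in the Fitting subgroup; this forces $G/N=\langle\bar a_1,\dots,\bar a_k\rangle\le F(G/N)$, so that $G/N=F(G/N)$ is nilpotent. The genuinely delicate step --- entirely analogous to the bounded-class conclusion of Lemma \ref{local nilpotency mcw-power} --- would then be to bound the nilpotency class of $F(G/N)$ in terms of $k$ and $n$ alone, and it is exactly this quantitative refinement that makes the passage to the inverse limit produce a nilpotent, and not merely a pronilpotent, group.
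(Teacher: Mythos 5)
Your argument is correct and is precisely what the paper intends: the paper offers no written proof beyond declaring the lemma ``straightforward from \cite[Lemma 4.1]{danilo_shumyatsky}'', and your write-up simply makes explicit the routine reduction to finite continuous quotients, the uniform $(k,n)$-bounded class supplied by that lemma, and the passage to the inverse limit. You also correctly flag that the uniformity of the class bound is what upgrades the conclusion from pronilpotent to nilpotent.
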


Recall that an element $a$ of a procyclic group $K$ is a generator if $\langle a\rangle=K$. 
Let $\mathbb{U}_p$ denote the set of generators of $\zp$ and $\mathbb{U}$ the set of generators of $\hz$. Of course, $\mathbb{U}_p=\zp\setminus\Phi(\zp)$, where $\Phi(\zp)$ is the Frattini subgroup of $\zp$. One can check that $\mathbb{U}=\prod_p\mathbb{U}_p$ is the Cartesian product of $\mathbb{U}_p$ over all primes $p$. It is important to note that, if we view $\hz$ as a ring, then the set $\mathbb{U}$ is precisely the multiplicative group of $\hz$.

Let $\{p_1,p_2,\dots\}$ be the set of all primes. Every element $i$ of $\hz$ can be uniquely written as an infinite product $i=\prod i_n$, where $i_n\in\mathbb Z_{p_n}$. Note that $i\in\mathbb{U}$ if and only if $i_n\in\mathbb U_{p_n}$ for each $n=1,2,\dots$. A procyclic group $K$ can be uniquely written as a Cartesian product $K=\prod_n K_n$, where $K_n$ is the pro-$p_n$ part of $K$ (here $K_n$ is either trivial or the Sylow $p_n$-subgroup of $K$). Each element $a\in K$ can be uniquely written in the form $a=\prod_n a_n$, where $a_n\in K_n$. In what follows we will use that $a^i=\prod_n a_n^{i_n}$ whenever $i\in\hz$.

Let $F=F(x_1,\dots,x_k)$ be the free profinite group of rank $k$ and let $F_j$ denote the $j$th term of the lower central series of $F$. Nontrivial elements of $F$ are called profinite words.  Since the abstract free group of rank $k$ naturally embeds in $F$, every abstract group word on $k$ variables can be viewed as a profinite word. We say that the (profinite) word $w=w(x_1,\dots,x_k)$ has degree $j$ if $w\in F_j\setminus F_{j+1}$. For completeness' sake we say that the elements lying in the intersection of the lower central series of $F$ have infinite degree. 

\begin{lemma}\label{profinite lemma 10} Let $w=w(x_1,\dots,x_k)$ be a  profinite word of degree $j$. Let $G$ be a profinite nilpotent group generated by elements $a_1,\dots,a_k$ and denote by $X$ the set of all elements of the form $w(a_1^i,\dots,a_k^i)$, where $i\in\mathbb{U}$. Assume that $|X|<2^{\aleph_0}$. Then, every element of $X$ has finite order. 
\end{lemma}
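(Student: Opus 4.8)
My plan is to reduce, in two steps, to a cardinality estimate inside a single pro-$p$ group, then run an induction on nilpotency class powered by one clean commutator computation, leaving a single genuinely hard central case. \emph{First reduction (to one value).} Since $\mathbb{U}$ is precisely the unit group of the ring $\hz$, for any $i_0\in\mathbb{U}$ the elements $b_m:=a_m^{i_0}$ again generate $G$ (because $\langle a_m^{i_0}\rangle=\langle a_m\rangle$), and the set $\{w(b_1^i,\dots,b_k^i):i\in\mathbb{U}\}$ equals $X$ since $i_0\mathbb{U}=\mathbb{U}$. Thus every element $w(a_1^{i_0},\dots,a_k^{i_0})$ of $X$ is the ``leading value'' $w(b_1,\dots,b_k)$ of some generating tuple subject to the same hypothesis, and it suffices to prove that for every such tuple the single element $a:=w(a_1,\dots,a_k)$ has finite order.

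\emph{Second reduction (to a single prime).} As $G$ is nilpotent and profinite, $G=\prod_p G_p$ is the Cartesian product of its Sylow pro-$p$ subgroups, and evaluating the word componentwise gives $w(a_1^i,\dots,a_k^i)=\prod_p w(a_{1,p}^{i_p},\dots,a_{k,p}^{i_p})$ for $i=\prod_p i_p$. Hence $X$ surjects onto each $X_p:=\{w(a_{1,p}^t,\dots,a_{k,p}^t):t\in\mathbb{U}_p\}$, and $|X|\ge\prod_p|X_p|$; in particular $|X|=2^{\aleph_0}$ as soon as one factor $|X_p|$ equals $2^{\aleph_0}$ or infinitely many factors have size at least $2$. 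If $a$ had infinite order, then either some $p$-component $a_{(p)}=w(a_{1,p},\dots,a_{k,p})$ has infinite order, or infinitely many of them are nontrivial. So it is enough to prove the pro-$p$ assertion that infinite order of $a_{(p)}$ forces $|X_p|=2^{\aleph_0}$, together with the fact that for all but finitely many $p$ a nontrivial $a_{(p)}$ forces $|X_p|\ge 2$.

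\emph{The pro-$p$ core.} Writing $w$ in collected form modulo $F_{j+1}$ as a product of weight-$j$ basic commutators, each of which scales by $t^j$ under the substitution $x_m\mapsto x_m^t$, one obtains the identity $w(a_1^t,\dots,a_k^t)\equiv a^{t^j}\pmod{\gamma_{j+1}(G)}$. If the image $\bar a$ of $a$ in $G/\gamma_{j+1}(G)$ has infinite order, this already finishes the argument: the power map $t\mapsto t^j$ sends $\zpx$ onto the open---hence uncountable---subgroup $(\zpx)^j$, and $s\mapsto\bar a^{s}$ is injective because $\langle\bar a\rangle\cong\zp$ is torsion-free, so $\{w(a_1^t,\dots,a_k^t)\bmod\gamma_{j+1}(G):t\in\zpx\}$ has cardinality $2^{\aleph_0}$. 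Passing to $G/\gamma_c(G)$ and inducting on the nilpotency class then disposes of every case in which the infinite order of $a$ survives in a proper nilpotent quotient.

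\emph{The remaining obstacle.} What is left is the case in which $a$ has infinite order but becomes torsion modulo the last nontrivial term $Z=\gamma_c(G)$, so that $a^{p^n}\in Z$ has infinite order. After reducing to the torsion-free case I would examine the $Z$-component of $t\mapsto w(a_1^t,\dots,a_k^t)$: by the Hall--Petrescu collection formula this is a $\zp$-polynomial function of $t$ of bounded degree, and I must show it takes $2^{\aleph_0}$ values on $\zpx$. Equivalently, I must rule out that $w(a_1^t,\dots,a_k^t)$ is constant in $t$ while $a\neq 1$; comparing leading terms modulo $\gamma_{r+1}(G)$ at the lowest level $r$ carrying a nontrivial contribution reduces this to the statement that a fixed nonzero integer polynomial takes unit values modulo $p$, which holds for all but finitely many $p$ and simultaneously supplies the ``$|X_p|\ge 2$ for almost all $p$'' fact needed in the second reduction. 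I expect this collection-formula analysis---extracting the polynomial behaviour of $w(a_1^t,\dots,a_k^t)$ along the lower central series and controlling its leading coefficients---to be the main technical difficulty.
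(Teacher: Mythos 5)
Your proposal is incomplete at precisely the point where the real work lies. The two reductions and the leading\hyphenation{lead-ing}-term identity $w(a_1^t,\dots,a_k^t)\equiv w(a_1,\dots,a_k)^{t^j} \pmod{\gamma_{j+1}(G)}$ are all ingredients the paper also uses, and your argument is sound as far as it goes: it disposes of the case where the infinite order of $a$ is already visible modulo $\gamma_{j+1}(G)$ or in a proper quotient $G/\gamma_c(G)$. But the ``remaining obstacle'' --- infinite order concentrated in the last term $\gamma_c(G)$ --- is not a loose end; it is the whole content of the lemma, and your plan for it (extract the $\gamma_c(G)$-component of $t\mapsto w(a_1^t,\dots,a_k^t)$ as a $\zp$-polynomial via Hall--Petrescu and show a nonconstant one takes $2^{\aleph_0}$ values) is left entirely unexecuted; you yourself flag it as the main technical difficulty. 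The same unproved analysis is also what your second reduction leans on for the claim that a nontrivial $a_{(p)}$ forces $\lvert X_p\rvert\ge 2$ for almost all $p$, so the global case of infinitely many nontrivial torsion components is open as well.

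The paper circumvents this case rather than confronting it. Its induction is not on the nilpotency class $c$ but on $c-j$, and it is applied not to $a$ itself but to the auxiliary words $w_s$ of degree $\ge j+1$ defined by $w(x_1^s,\dots,x_k^s)=w(x_1,\dots,x_k)^{s^j}w_s(x_1,\dots,x_k)$: since $w_s(a_1^i,\dots,a_k^i)=\bigl(w(a_1^i,\dots,a_k^i)^{s^j}\bigr)^{-1}w(a_1^{is},\dots,a_k^{is})$ and $is\in\mathbb{U}$, the set of these values also has cardinality $<2^{\aleph_0}$, so by induction they all have finite order. In a finitely generated nilpotent pro-$p$ group the torsion elements form a finite subgroup; killing it makes the scaling identity exact, $w(a_1^{is},\dots,a_k^{is})=w(a_1^s,\dots,a_k^s)^{i^j}$, and then Proposition 2.1 of \cite{dks} (a continuous map from a profinite space with image of size $<2^{\aleph_0}$ is constant on a nonempty open set) applied to $i\mapsto a^{i^j}$ forces $\langle a\rangle$ to be finite --- no collection-formula analysis of the polynomial along the lower central series is ever needed. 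If you want to salvage your route, you would have to carry out the Hall--Petrescu computation in full and control its leading coefficients uniformly in $p$; as written, the proof has a genuine gap.
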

\begin{proof} Let $c$ be the nilpotency class of $G$. If $c-j$ is negative, then $X=1$ and the result holds. Therefore we assume that $c-j\geq 0$ and argue by induction on $c-j$. In the free group $F$, modulo $F_{j+1}$, the word $w$ is a product of $\gamma_j$-words in $x_1,\dots,x_k$. Therefore for any $s\in\hz$ we have $$w(x_1^s,x_2^s,\dots,x_k^s)=w(x_1,x_2,\dots,x_k)^{s^j}w_s(x_1,x_2,\dots,x_k),$$ where $w_s(x_1,x_2,\dots,x_k)$ is a profinite word of degree at least $j+1$ (see for example \cite[Lemma 1.3.3]{Segal}). Here the word $w_s$ depends on $s$. For this word, if $s\in\mathbb{U} $, there are less than $2^{\aleph_0}$ elements of the form $w_s(a_1^i,a_2^i,\dots,a_k^i)$, where $i\in\mathbb{U}$. Indeed, use the fact that the product $is$ belongs to $\mathbb{U}$. We have $$w_s(a_1^i,a_2^i,\dots,a_k^i)=(w(a_1^i,a_2^i,\dots,a_k^i)^{s^j})^{-1}w(a_1^{is},a_2^{is},\dots,a_k^{is}).$$ In  $G$ there are less than $2^{\aleph_0}$ elements of the form $w(a_1^i,a_2^i,\dots,a_k^i)$ and as many of the form $w(a_1^{is},a_2^{is},\dots,a_k^{is})$. Hence, $G$ contains less than $2^{\aleph_0}$ elements of the form $w_s(a_1^i,a_2^i,\dots,a_k^i)$. By induction, all elements of the form $w_s(a_1^i,a_2^i,\dots,a_k^i)$ have finite order.

Consider now the particular case where $G$ is a pro-$p$ group for some prime $p$. In this case the torsion part of $G$ is a finite subgroup. Passing to the quotient over that subgroup we can assume that all elements of the form $w_s(a_1^i,a_2^i,\dots,a_k^i)$ are trivial and so 
\begin{equation*}
    w(a_1^{is},\dots,a_k^{is})=w(a_1^s,\dots,a_k^s)^{i^j}=w(a_1,\dots,a_k)^{(is)^j}
\end{equation*} 
for each $i,s\in\mathbb{U}$. Since we are now in the case where $G$ is a pro-$p$ group, the above equation holds for each $i,s\in\mathbb{U}_p$. Set $a=w(a_1^s,\dots,a_k^s)$ and $K=\langle a\rangle$. Remark that the set $\mathbb{U}_p$ is closed in the profinite topology of $\zp$. In a natural way we have a continuous map $\psi$ from $\mathbb{U}_p$ to $K$ taking each $i\in\mathbb{U}_p$ to $a^{i^j}$. The cardinality of the image of $\psi$ is less than $2^{\aleph_0}$. Proposition 2.1 from \cite{dks} ensures the existence of an open subset $U$ of $\mathbb{U}_p$ such that $\psi$ is constant on $U$. Thus, for any $i_1,i_2\in U$ we have $a^{i_1^j}=a^{i_2^j}$. Since $K$ is a procyclic pro-$p$ group, we conclude that $K$ is finite.  This completes the proof in the case where $G$ is a pro-$p$ group.

We now drop the assumption that $G$ is a pro-$p$ group. Let again $\{p_1,p_2,\dots\}$ be the set of all primes and write $G=\prod_n G_{p_n}$, where $G_{p_n}$ is the pro-$p_n$ part of $G$. For each $t=1,\dots,k$ write $a_t=\prod_na_{tn}$, where $a_{tn}$ belongs to $G_{p_n}$.

Given $i=\prod i_n\in\mathbb{U}$, we have \begin{equation}
   w(a_1^i, \dots, a_k^i) = \prod_n w(a_{1n}^{i_n}, \dots, a_{kn}^{i_n}),
\end{equation} \noindent where each $i_n$ belongs to $\mathbb{U}_{p_n}$. Let $$X_n = \{w(a_{1n}^{i_n}, \dots, a_{kn}^{i_n}) \, | \, i_n \in \mathbb{U}_{p_n}\}.$$  
Recall that $\mathbb{U}=\prod_n\mathbb{U}_{p_n}$. It follows that $X=\prod_n X_n$ is the Cartesian product of $X_n$ since any element from $X$ can be decomposed as in equation (1). 

Since $|X| < 2^{\aleph_0}$, the number of nontrivial $X_n$ is finite. We already know that the lemma holds in the case of pro-$p$ groups and therefore it holds for each pro-$p$ part $G_n$ of $G$. This implies that every element in $X_n$ has finite order. Taking into account that $X=\prod_n X_n$, where only finitely many of $X_n$ are nontrivial, we deduce that every element in $X$ has finite order. The proof is complete.
\end{proof}

\section{Proof of Theorem 1.1}

Lemma \ref{profinite lemma 10} provides an important technical tool that can be used to establish finiteness of certain nilpotent subgroups. Thus, in a natural way a problem on conciseness of a word can be reduced to finding an appropriate nilpotent subgroup. The next lemma shows that under the hypotheses of Theorem \ref{theorem 1} the group $G$ has plenty of almost nilpotent subgroups.

\begin{lemma}\label{v-locally-finite-by-nilpotent}
Let $n,q$ be positive integers and $v$ a multilinear commutator word. Let $w$ be either the word $[v^q, {}_n y]$ or $[y, {}_n v^q]$, where $y$ is an independent variable. Assume that $G$ is a profinite group such that $|G_w| < 2^{\aleph_0}$ and $w(G)$ is generated by finitely many $w$-values. Let $K$ be a subgroup generated by finitely many $v^q$-values. There exists a finite normal subgroup $T$ of $G$ such that $KT/T$ is nilpotent.
\end{lemma}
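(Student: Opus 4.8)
The plan is to manufacture a single finite normal subgroup $T$ of $G$ modulo which $K$ becomes nilpotent, by combining two independent consequences of the hypotheses — that the chosen generators of $K$ are Engel modulo a finite subgroup, and that $K$ is soluble — and then invoking Lemma~\ref{gruenberg}. Write $a_1,\dots,a_k$ for the $v^q$-values generating $K$. Since $v$ is a multilinear commutator word, Lemma~\ref{mcw symmetric} shows $G_v$ is symmetric, and taking $q$th powers shows $G_{v^q}$ is symmetric as well. Treating $v^q$ as a single compound variable, I would first realise the generators as Engel elements modulo a finite subgroup: if $w=[y,{}_n v^q]$, Corollary~\ref{28} (applied with $v^q$ in place of $v$) gives a finite normal subgroup $T_0$ of $G$ modulo which $a_1,\dots,a_k$ are left $2n$-Engel; if $w=[v^q,{}_n y]$, I apply Lemma~\ref{values are engel} with the word $v^q$ and $u=y$ (every element is a $y$-value, so the condition on $u$ holds trivially) to the $v^q$-values $a_1^{-1},\dots,a_k^{-1}$, obtaining a finite normal $T_0$ modulo which $a_1,\dots,a_k$ are left $(2n+2)$-Engel. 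Either way there is a fixed $N$ and a finite normal subgroup $T_0$ of $G$ such that $a_1T_0,\dots,a_kT_0$ are left $N$-Engel elements of $G/T_0$.

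The crux is the solubility of $K$, and for this I would pass to the quotient $G/w(G)$, where $w$ is a law. If $w=[y,{}_nv^q]$ this says every $v^q$-value is left $n$-Engel in $G/w(G)$; if $w=[v^q,{}_ny]$ it says every $v^q$-value is right $n$-Engel, whereupon the symmetry of $G_{v^q}$ together with Heineken's Lemma~\ref{hnk} upgrades the conclusion to: every $v^q$-value is left $(n+1)$-Engel. Hence in every finite continuous quotient of $G/w(G)$ all $v^q$-values are $N'$-Engel for a fixed $N'$, while $Kw(G)/w(G)$ is generated by $k$ of them. Lemma~\ref{local nilpotency mcw-power} then makes the image of $Kw(G)/w(G)$ in each finite quotient nilpotent of $(k,N',q)$-bounded class, so $Kw(G)/w(G)$ is nilpotent. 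On the other hand Lemma~\ref{w(G)' finite} gives that $w(G)'$ is finite, so $w(G)$, and therefore $K\cap w(G)$, is finite-by-abelian and in particular soluble. Since $K/(K\cap w(G))\cong Kw(G)/w(G)$ is nilpotent, $K$ is soluble.

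Finally, $KT_0/T_0$ is a quotient of the soluble group $K$, hence soluble, and as a closed subgroup of $G/T_0$ it is topologically generated by the finitely many left $N$-Engel elements $a_1T_0,\dots,a_kT_0$. Lemma~\ref{gruenberg} then yields that $KT_0/T_0$ is nilpotent, so $T:=T_0$ is the required finite normal subgroup. I expect the solubility step to be the main obstacle: one is tempted to make \emph{all} $v^q$-values of $G$ Engel modulo a single finite subgroup, which the cardinality hypothesis does not obviously deliver; the point is instead that on $G/w(G)$ the word $w$ is a law, so the Engel condition on every value there comes for free and is paid for only by the finite subgroup $w(G)'$. Some care with the left/right Engel bookkeeping is also needed, which is exactly where Lemma~\ref{mcw symmetric} and Lemma~\ref{hnk} enter.
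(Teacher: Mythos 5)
Your proposal is correct and follows essentially the same route as the paper's own proof: solubility of $K$ is obtained by observing that $Kw(G)/w(G)$ is nilpotent (via Lemma~\ref{mcw symmetric}, Lemma~\ref{hnk} and Lemma~\ref{local nilpotency mcw-power} applied where $w$ is a law) while $K\cap w(G)$ is soluble by Lemma~\ref{w(G)' finite}, the generators are made Engel modulo a finite normal subgroup via Corollary~\ref{28} or Lemma~\ref{values are engel} according to the shape of $w$, and Lemma~\ref{gruenberg} concludes. The only difference is cosmetic ordering of the steps, so there is nothing of substance to add.
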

\begin{proof}
Let $a_1, \dots, a_k$ be $v^q$-values in $G$ and let $K=\langle a_1, \dots, a_k\rangle$. Suppose first that $w(G)=1$. The combination of Lemma \ref{hnk} and Lemma \ref{mcw symmetric} ensures that all $v^q$-values are $(n+1)$-Engel. By Lemma \ref{local nilpotency mcw-power} the image of $K$ in any finite quotient of $G$ is nilpotent of $(k,n,q)$-bounded class. Hence, $K$ is nilpotent of $(k,n,q)$-bounded class.

We now drop the assumption that $w(G)=1$. Because of Lemma \ref{w(G)' finite}, without loss of generality we can assume that $w(G)$ is abelian. The previous paragraph shows that $K$ is soluble since $K\cap w(G)$ is abelian. Applying Lemma \ref{values are engel} in case $w = [v^q, {}_ny]$ or Corollary \ref{28} in case $w = [y, _n v^q]$, deduce that $G$ has a normal finite subgroup $T$ such that that $a_1^{-1}, \dots, a_k^{-1}$ are $2n+2$ or $2n$-Engel in $G/T$, respectively. The image of $K$ in $G/T$ is a soluble subgroup generated by $k$ Engel elements. Theorem \ref{gruenberg} then ensures that $K/(K\cap T)$ is nilpotent. The result follows. 
\end{proof}

We are in a position to prove Theorem \ref{theorem 1}, which we restate here for the reader's convenience: 
\medskip

\noindent{\it Let $k,n$ and $q$ be positive integers, and let $v=\gamma_k(x_1,x_2,\dots ,x_k)$. Suppose that $w$ is one of the words $[y, {}_n v^q]$ or $[v^q, {}_n y]$. If $G$ is a profinite group such that $|G_w| < 2^{\aleph_0}$ and $w(G)$ is generated by finitely many $w$-values, then $w(G)$ is finite.}

\begin{proof} Consider first the case where $w=[y, {}_n v^q]$. In view of Lemma \ref{w(G)' finite} we can pass to the quotient $G/w(G)'$ and assume that $w(G)$ is abelian. Let $a$ be any $v^q$-value and $t$ any element of $G$. Let $K$ be the subgroup of $G$ generated by $a$ and $a^t$. Lemma \ref{v-locally-finite-by-nilpotent} ensures that $\gamma_{c}(K)$ is contained in a finite normal subgroup $T$ of $G$ for some positive integer $c$. We pass to the quotient $G/T$ and assume that $K$ is nilpotent.  Note that we can rewrite the $w$-value $[t, {}_n a]$ as $[a^{-t}a, {}_{n-1} a]$. Let $w_0(x_1, x_2)$ be the word $[x_1^{-1}x_2, {}_{n-1} x_2]$. If $i$ belongs to $\mathbb{U}$, then $$w_0((a^{t})^i, a^i) = [(a^{-t})^ia^i, {}_{n-1} a^i]=[t, {}_n a^i].$$ Recall that the word $v^q$ is weakly rational. Therefore Lemma \ref{weakly rational words in profinite groups} ensures that the power $a^i$ must be again a $v^q$-value, and then $w_0((a^{t})^i, a^i)$ is a $w$-value for all $i \in \mathbb{U}$. If $X = \{w_0((a^{t})^i, a^i), \, i \in \mathbb{U}\}$, we must have $|X| < 2^{\aleph_0}$. Invoking Lemma \ref{profinite lemma 10}, we conclude that the element $w_0(a^{t}, a) = [t, {}_n a]$ has finite order. Since this argument holds for all $w$-values in $G$, it follows that $w(G)$ is an abelian subgroup generated by finitely many elements of finite order. We conclude that $w(G)$ is finite, as claimed.

Now we deal with the case where $w=[v^q, {}_n y]$. Once again, we assume that $w(G)$ is abelian. Let $b$ be any $v^q$-value in $G$ and $t \in G$. Consider the subgroup $L$ of $G$ generated by the $v^q$-values $b, b^t, \dots, b^{t^{n}}$. Applying Lemma \ref{v-locally-finite-by-nilpotent}, we deduce that $\gamma_{c}(K)$ is contained in a finite normal subgroup $T$ of $G$ for some $c\geq1$. Pass to the quotient $G/T$ and assume that $L$ is nilpotent. We apply Lemma \ref{rewrite the word} to the word $w$ and rewrite it as a word $w_1 = w_1(v^q, (v^q)^y, \dots, (v^q)^{y^{n}})$. If we take any $i \in \mathbb{U}$, we use Lemma \ref{weakly rational words in profinite groups} to deduce that $b^i$ is a $v^q$-value, which implies that $$w_1(b^i, (b^t)^i, \dots, (b^{t^{n}})^i) = [b^i, {}_n t]$$ is a $w$-value for all $i \in \mathbb{U}$. Once again, we may apply Lemma \ref{profinite lemma 10} and conclude that $[b, {}_n t]$ has finite order. Since this argument holds for any $w$-value of $G$, we conclude that $w(G)$ is an abelian subgroup generated by finitely many elements of finite order. Therefore $w(G)$ is finite. This completes the proof.
\end{proof}

\section{Proof of Theorem 1.2}

Recall the statement of Theorem \ref{theorem 2}: \medskip

\noindent{\it Let $n$ and $q$ be non-negative integers. Let $u$ be one of the words $y^q$, $[y_1, y_2]^q$ or any commutator-closed word. Assume that $v$ is a weakly rational word such that all $v$-values are also $u^{-1}$-values in any group and consider $w = [v, {}_n u]$. If $G$ is a profinite group such that $|G_w| < 2^{\aleph_0}$ and $w(G)$ is generated by finitely many $w$-values, then $w(G)$ is finite.}
\medskip

The following two lemmas are profinite versions of Proposition 2.1 and Proposition 2.2 of \cite{dms3}, respectively. The proofs of both are straightforward from the corresponding results on finite groups via the usual inverse limit argument.

\begin{lemma}\label{proposition 2.1}
Let $k,m, n, q, s$ be positive integers and $w$ a word. Let $G$ be a profinite group satisfying a law $w \equiv 1$ and let $H$ be a subgroup of $G$ generated by elements $a_1, \dots , a_k$ such that $$[a_i, \underbrace{[g_1, g_2]^q, \dots , [g_1, g_2]^q}_{n}, [g_1, g_2]^s] = 1$$
\noindent for each $g_1, g_2 \in H$ and each $i = 1, \dots , k$. Assume further that the elements $a_1^{-1}, \dots, a_k^{-1}$ are $m$-Engel. Then $H$ is nilpotent.
\end{lemma}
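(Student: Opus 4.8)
The plan is to reduce the statement to the finite setting, where the corresponding assertion is \cite[Proposition 2.1]{dms3}, and then to recover the conclusion for the profinite group $H$ by the usual inverse limit argument. The feature of the finite result that makes this reduction work is that it does not merely assert nilpotency of $H$ but yields nilpotency of class bounded by some integer $c=c(k,m,n,q,s)$ depending only on the five parameters. I would therefore record the finite statement in this quantitative form before starting, just as \cite[Proposition 1]{dms1} is stated as a bounded nilpotency result in Lemma \ref{local nilpotency mcw-power}.

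First I would fix an open normal subgroup $N$ of $G$, pass to the finite quotient $G/N$, and denote images by bars. Since $H$ is the closed subgroup topologically generated by $a_1,\dots,a_k$, its image $\overline{H}=HN/N$ is the finite group generated by $\overline{a}_1,\dots,\overline{a}_k$. I claim all hypotheses descend. The quotient $G/N$ still satisfies the law $w\equiv 1$. Given any $\overline{g}_1,\overline{g}_2\in\overline{H}$, I lift them to $g_1,g_2\in H$ and apply the projection to the relation $[a_i,{}_n[g_1,g_2]^q,[g_1,g_2]^s]=1$, obtaining $[\overline{a}_i,{}_n[\overline{g}_1,\overline{g}_2]^q,[\overline{g}_1,\overline{g}_2]^s]=1$ for every $i$. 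The same lifting shows that each $\overline{a}_i^{-1}$ remains $m$-Engel. Hence $\overline{H}$ meets the hypotheses of \cite[Proposition 2.1]{dms3}, so $\overline{H}$ is nilpotent of class at most $c$, with $c$ independent of the choice of $N$.

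It remains to assemble these uniform bounds. As $\overline{H}=HN/N$ has nilpotency class at most $c$, the image of $\gamma_{c+1}(H)$ in $G/N$ is trivial, i.e. $\gamma_{c+1}(H)\subseteq N$. Letting $N$ range over all open normal subgroups of $G$ and using that $\gamma_{c+1}(H)$ is closed while $\bigcap_N N=1$, I conclude $\gamma_{c+1}(H)=1$. Thus $H$ is nilpotent of class at most $c$, as required.

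The genuine content of the lemma is already carried by the finite result, and the profinite upgrade is routine; the one point that must not be glossed over is the uniformity of the class bound. Knowing merely that every finite quotient of $H$ is nilpotent would give nothing beyond pronilpotency, so the argument truly relies on the bounded-class form of \cite[Proposition 2.1]{dms3} rather than on bare nilpotency of the finite images.
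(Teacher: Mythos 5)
Your proof is correct and takes essentially the same approach as the paper, which offers no details beyond saying the lemma is "straightforward from the corresponding results on finite groups via the usual inverse limit argument" --- exactly the reduction you carry out, and your insistence on the uniform class bound is the right thing to make explicit. The only quibble is that the bound in \cite[Proposition 2.1]{dms3} may depend on the word $w$ as well as on $k,m,n,q,s$, which is harmless here since $w$ is fixed throughout.
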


\begin{lemma}\label{proposition 2.2}
Let $k, m, n, q, s$ be positive integers, and let $u, v$ be words such that $u$ is commutator-closed and the $v^{-1}$-values are $u$-values in every group. Let $G$ be a profinite group satisfying the law $$[v, \underbrace{u^q, \dots , u^q}_{n}, u^s] \equiv 1$$ \noindent and assume that all $v^{-1}$-values are $m$-Engel in $G$. Suppose that $H$ is a subgroup of $G$ generated by finitely many $v$-values. Then $H$ is nilpotent.
\end{lemma}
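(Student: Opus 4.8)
The plan is to deduce the statement from its finite-group counterpart, namely Proposition~2.2 of \cite{dms3}, by means of the standard inverse limit argument. Let $a_1,\dots,a_k$ be the $v$-values that topologically generate $H$, so that $H=\langle a_1,\dots,a_k\rangle$. The crucial point is that nilpotency of a profinite group can be detected on its finite continuous quotients, \emph{provided} one has a uniform bound on the class: a profinite group $H$ is nilpotent of class at most $c$ if and only if every finite continuous quotient of $H$ is nilpotent of class at most $c$.

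First I would fix an arbitrary open normal subgroup $N$ of $G$ and pass to the finite quotient $\bar G=G/N$, writing $\bar a_i$ and $\bar H$ for the images of $a_i$ and $H$, so that $\bar H=HN/N\cong H/(H\cap N)$. I would then verify that $\bar G$ inherits all the hypotheses of the finite version. The law $[v,\underbrace{u^q,\dots,u^q}_{n},u^s]\equiv1$ passes to $\bar G$ because laws are preserved under homomorphic images. The assumption that every $v^{-1}$-value is $m$-Engel is likewise inherited, since the bound $m$ is uniform and the Engel condition is preserved by the projection $G\to\bar G$. Finally, the purely word-theoretic hypotheses---that $u$ is commutator-closed and that every $v^{-1}$-value is a $u$-value---hold in every group, in particular in $\bar G$. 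The images $\bar a_1,\dots,\bar a_k$ are $v$-values in $\bar G$, and $\bar H=\langle\bar a_1,\dots,\bar a_k\rangle$ is generated by finitely many $v$-values.

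Applying \cite[Proposition~2.2]{dms3} to $\bar G$ and $\bar H$, I obtain that $\bar H$ is nilpotent, and---this is the feature I would extract from the finite statement---its nilpotency class is bounded by a constant $c=c(k,m,n,q,s)$ depending only on the displayed parameters and not on the choice of $N$. Since $\bigcap_N N=1$, the group $H$ is the inverse limit of the quotients $H/(H\cap N)$ as $N$ ranges over the open normal subgroups of $G$, and every such quotient satisfies $\gamma_{c+1}(\bar H)=1$. Consequently $\gamma_{c+1}(H)=1$, so $H$ is nilpotent of class at most $c$, as required.

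The main obstacle is exactly the uniformity of the class bound. An inverse limit of a family of nilpotent groups need not be nilpotent if their classes are unbounded, so the argument genuinely requires that \cite[Proposition~2.2]{dms3} deliver a class depending only on $k,m,n,q,s$ and not on the ambient finite group---in the same spirit as the $(k,n,q)$-bounded class appearing in Lemma~\ref{local nilpotency mcw-power}. Thus the only real work lies in confirming that the finite result is stated, or can be restated, with such a $(k,m,n,q,s)$-bounded class; once this is in hand, the passage to the profinite setting is routine.
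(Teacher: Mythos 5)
Your proof is correct and is essentially the paper's own argument: the authors likewise deduce the lemma from the finite-group Proposition~2.2 of \cite{dms3} via ``the usual inverse limit argument,'' which works precisely because the finite statement delivers a nilpotency class bounded in terms of $(k,m,n,q,s)$ alone. You have rightly identified this uniform class bound as the one point that makes the passage to the inverse limit legitimate.
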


\begin{lemma}\label{props 2.1 2.2 shortcut}
Let $G$, $u,v$ and $w$ be as in Theorem \ref{theorem 2}. Assume that $K$ is a subgroup generated by finitely many $v$-values. There is a finite normal subgroup $T$ of $G$ such that $KT/T$ is nilpotent.
\end{lemma}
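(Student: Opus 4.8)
The plan is to follow the blueprint of Lemma~\ref{v-locally-finite-by-nilpotent}, replacing the use of Lemma~\ref{local nilpotency mcw-power} (which is special to multilinear commutator words) by the three-case Lemmas~\ref{proposition 2.1} and~\ref{proposition 2.2}. Write $K=\langle a_1,\dots,a_k\rangle$ with each $a_i$ a $v$-value. By Lemma~\ref{w(G)' finite} the subgroup $w(G)'$ is finite, so after passing to $G/w(G)'$ I may assume that $w(G)$ is abelian; it then suffices to find a finite normal subgroup modulo which $K$ is nilpotent. The strategy has three stages: (i) show that the image $\bar K$ of $K$ in $\bar G:=G/w(G)$ is nilpotent, which---because $K\cap w(G)$ is abelian---forces $K$ to be soluble; (ii) produce a finite normal $T\le w(G)$ modulo which the generators become Engel; and (iii) conclude with the Gruenberg-type Lemma~\ref{gruenberg}.

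For stage (i) I work in $\bar G$, where $w\equiv1$. The key observation is that the commutator identity $[x,y^{-1},y^{-1}]=[y^{xy^{-1}},y^{-1}]$ yields, for every $v$-value $a$ and every $g\in\bar G$,
$$[g,{}_{n+1}a^{-1}]=[a^{ga^{-1}},{}_n a^{-1}].$$
Since $a^{ga^{-1}}$ is again a $v$-value and $a^{-1}$ is a $u$-value (recall that all $v^{-1}$-values are $u$-values), the right-hand side is a $w$-value, hence trivial in $\bar G$; thus every $v^{-1}$-value is left $(n+1)$-Engel in $\bar G$. When $u=y^q$ or $u=[y_1,y_2]^q$, the element $[g_1,g_2]^q$ is a $u$-value for all $g_1,g_2$, so $w\equiv1$ gives $[\bar a_i,{}_n[g_1,g_2]^q]=1$, and $\bar K$ satisfies the hypotheses of Lemma~\ref{proposition 2.1} (with $m=n+1$). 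When $u$ is commutator-closed I instead invoke Lemma~\ref{proposition 2.2} with $q=s=1$: the required law $[v,{}_{n+1}u]\equiv1$ is immediate from $w\equiv1$, the hypothesis that all $v^{-1}$-values are $u$-values holds by assumption, and the global Engel condition is exactly what the displayed identity provides. In every case $\bar K$ is nilpotent, whence $K$ is soluble.

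For stages (ii) and (iii), I apply Lemma~\ref{values are engel} to $a_1,\dots,a_k$ to obtain a finite subgroup $T$, characteristic in $w(G)$ and hence normal in $G$, such that $a_1^{-1},\dots,a_k^{-1}$ are $(2n+2)$-Engel in $G/T$. The image $KT/T$ is then a soluble profinite group generated by the finitely many Engel elements $a_1^{-1},\dots,a_k^{-1}$, so Lemma~\ref{gruenberg} shows that $KT/T$ is nilpotent. Pulling back through the finite subgroup $w(G)'$ discarded at the outset produces the required finite normal subgroup of the original $G$.

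The main obstacle is stage (i) in the commutator-closed case, since Lemma~\ref{proposition 2.2} demands that \emph{all} $v^{-1}$-values be Engel in $\bar G$, not merely the chosen generators; this is precisely where the commutator-identity computation is indispensable, as it delivers the global Engel property for free once $w\equiv1$. The remaining points---checking that $[g_1,g_2]^q$ is a $u$-value for the first two forms of $u$, matching each form of $u$ to the correct lemma, and verifying that solubility descends to $KT/T$---are routine, as is the bookkeeping needed to align the subgroup $T$ with the initial reduction modulo $w(G)'$.
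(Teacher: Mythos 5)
Your proposal is correct and follows essentially the same route as the paper: reduce modulo the finite subgroup $w(G)'$, obtain solubility of $K$ from the nilpotency of its image in $G/w(G)$ via Lemma \ref{proposition 2.1} or \ref{proposition 2.2}, and then combine Lemma \ref{values are engel} with Lemma \ref{gruenberg}. The only difference is cosmetic: where you unpack the commutator identity $[x,y^{-1},y^{-1}]=[y^{xy^{-1}},y^{-1}]$ to get the Engel condition in $G/w(G)$, the paper simply invokes Lemma \ref{values are engel} with $T=1$, which encapsulates the same computation.
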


\begin{proof}
We will prove only the case where $u = y^q$ or $[y_1,y_2]^q$, since the other case can be obtained by the same argument simply replacing the use of Lemma \ref{proposition 2.1} by that of Lemma \ref{proposition 2.2}.

So let $u = y^q$ or $[y_1, y_2]^q$. If $w(G)=1$, we use Lemma \ref{values are engel} with $T = 1$. Then Lemma \ref{proposition 2.1} applies and we conclude that $K$ is nilpotent. Suppose that $w(G)\neq1$. In view of Lemma \ref{w(G)' finite} we can pass to the quotient $G/w(G)'$ and simply assume that $w(G)$ is abelian. Then $K$ is soluble. Now we apply Lemma \ref{values are engel} again and conclude that there is a finite normal subgroup $T$ of $G$ such that the image of $K$ in $G/T$ is generated by finitely many $(2n+2)$-Engel elements. Thus, Theorem \ref{gruenberg} ensures that the image of $K$ in $G/T$ is nilpotent. 
\end{proof}

\begin{proof}[Proof of Theorem \ref{theorem 2}]
Let $a \in G_v$ and $t \in G_u$. Let $K$ be the subgroup of $G$ generated by the elements $a, a^t, \dots, a^{t^{n}}$. Lemma \ref{props 2.1 2.2 shortcut} tells us that for some integer $c$ the subgroup $\gamma_{c}(K)$ is contained in a finite normal subgroup $T$ of $G$. We can pass to the quotient $G/T$ and without loss of generality assume that $K$ is nilpotent. By Lemma \ref{rewrite the word}, there is a word $w_0=w_0(x_1,\dots,x_{n+1})$ such that  $w_0(v, v^u, \dots, v^{u^{n}})=[v, {}_n u]$. Set $X = \{w_0(a^i, (a^t)^i, \dots, (a^{t^{n}})^i), \, i \in \mathbb{U}\}$. Combining the weak rationality of $v$ with Lemma \ref{weakly rational words in profinite groups}, we conclude that $X$ is contained in $G_w$. Now Lemma \ref{profinite lemma 10} implies that all $w$-values of $G$ have finite order. Therefore $w(G)$ is generated by finitely many elements of finite order. Since $w(G)'$ is finite, the theorem follows.
\end{proof}


\begin{thebibliography}{99}

\bibitem{as} C. Acciarri, P. Shumyatsky, On words that are concise in residually finite groups, J. Pure Appl. Algebra 218 (2014) 130 - 134. 

\bibitem{BuMe03} R.G. Burns, Y. Medvedev, Group laws implying virtual nilpotence, J. Aust. Math. Soc. 74
(2003) 295 – 312.


\bibitem{dks} E. Detomi, B. Klopsch, P. Shumyatsky, Strong conciseness in profinite groups, J. Lond. Math. Soc. (2). to appear.

\bibitem{dms0} E. Detomi, M. Morigi, P. Shumyatsky, On conciseness of words in profinite groups, J. Pure Appl. Algebra 220 (2016) 3010 - 3015.

\bibitem{dms1} E. Detomi, M. Morigi, P. Shumyatsky, Words of Engel type are concise in residually finite groups, Bull. Math. Sci. 09(02) (2019) 1950012.

\bibitem{dms2} E. Detomi, M. Morigi, P. Shumyatsky, Words of Engel type are concise in residually finite groups: Part II, Groups Geom. Dyn. to appear.


\bibitem{dms3} E. Detomi, M. Morigi, P. Shumyatsky, On bounded conciseness of Engel-like words in residually finite groups, J. Algebra 521 (2019) 1 - 15.

\bibitem{Gr53} K.W. Gruenberg, Two theorems on Engel groups, Math. Proc. Cambridge Philos. Soc  49 (1953) 377 - 380.

\bibitem{fernandez-morigi} G.A. Fern\'andez-Alcober, M. Morigi, Outer commutator words are uniformly
concise, J. London Math. Soc. 82 (2010) 581 – 595.

\bibitem{gushu} R. Guralnick, P. Shumyatsky, On rational and concise words, J. Algebra 429 (2015) 213 - 217.

\bibitem{ivanov} S. V. Ivanov, P. Hall’s conjecture on the finiteness of verbal subgroups, Izv. Vyssh. Ucheb. Zaved. 325 (1989) 60 – 70.

\bibitem{jaikin} A. Jaikin-Zapirain, On the verbal width of finitely generated pro-p group, Rev. Mat. Iberoam. 168 (2008) 393 – 412.

\bibitem{kelley} J.L. Kelley, General Topology, Van Nostrand, Toronto - New York - London, 1955.

\bibitem{ols} A.Yu. Ol’shanskii, Geometry of Defining Relations in Groups, Math. Appl., Sov. Ser., vol. 70, Kluwer Academic Publishers, Dordrecht, 1991.

\bibitem{robinson} D. Robinson, A Course in the Theory of Groups, second ed., Springer-Verlag, New York, 1996.

\bibitem{Segal} D. Segal, Words: Notes on Verbal Width in Groups, LMS Lect. Notes, vol. 361, Cambridge Univ. Press, Cambridge, 2009.

\bibitem{danilo_shumyatsky} P. Shumyatsky, D.S. Silveira, On finite groups with automorphisms whose fixed points are Engel, Arch. Math. 106(3) (2016) 209 - 218.

\bibitem{stt} P. Shumyatsky, A. Tortora, M. Tota. On varieties of groups satisfying an Engel type identity, J. Algebra 447 (2016) 479–489.
\end{thebibliography}
\end{document}